\newtheorem{definition}{Definition}[section]
\newtheorem{notation}[definition]{Notation}
\newtheorem{rema}[definition]{Remark}
\newtheorem{exa}[definition]{Example}
\newtheorem{obs}[definition]{Observation}
\newtheorem{proposition}[definition]{Proposition}
\newtheorem{theorem}[definition]{Theorem}
\newtheorem{corollary}[definition]{Corollary}
\def\ov#1{{\overline{#1}}}
\newenvironment{example}%
{\begin{exa}\rm}%
{\end{exa} }
\newenvironment{ac}{\noindent{\bf Acknowledgements:}}{}
\newcommand{\Res}{{\operatorname{Res}}}
\newcommand{\Sres}{{\operatorname{Sres}}}
\newcommand{\Id}{{\operatorname{Id}}}
\newcommand{\bfe}{{\boldsymbol{e}}}
\newcommand{\bfx}{{\boldsymbol{x}}}
\newcommand{\bfy}{{\boldsymbol{y}}}
\newcommand{\bfalpha}{{\boldsymbol{\alpha}}}
\newcommand{\bfgamma}{{\boldsymbol{\gamma}}}
\newcommand{\bfbeta}{{\boldsymbol{\beta}}}
\newcommand{\bfxi}{{\boldsymbol{\xi}}}
\newcommand{\bfLambda}{{\boldsymbol{\Lambda}}}
\def\fh{{f}^h}
\def\RR{{\mathcal R}}
\def\SS{{\mathcal S}}
\def\TT{{\mathcal T}}
\def\HH{{\mathcal H}}
\def\EE{{\mathcal E}}
\def\OO{{\mathcal O}}
\def\C{{\mathbb C}}
\def\N{{\mathbb N}}
\def\Z{{\mathbb Z}}
\begin{document}

\title{Subresultants  in Multiple Roots}

\author{Carlos D' Andrea}
\address{Universitat de Barcelona, Departament d'{\`A}lgebra i Geometria.
Gran Via 585, 08007 Barcelona, Spain.}
\email{cdandrea@ub.edu}
\urladdr{http://atlas.mat.ub.es/personals/dandrea}

\author{Teresa Krick}
\address{Departamento de Matem\'atica, Facultad de
Ciencias Exactas y Naturales, Universidad de Buenos Aires and IMAS,
CONICET, Argentina} \email{krick@dm.uba.ar}
\urladdr{http://mate.dm.uba.ar/\~\,krick}

\author{Agnes Szanto}
\address{Department of Mathematics, North Carolina State
University, Raleigh, NC 27695 USA}
\email{aszanto@ncsu.edu}
\urladdr{www4.ncsu.edu/\~\,aszanto }

\begin{abstract}
We extend our previous work on Poisson-like formulas for subresultants in roots  to the case of polynomials with multiple roots in
both the univariate and multivariate case, and also explore some
closed formulas in roots for  univariate polynomials in this
multiple roots setting.
\end{abstract}

\date{\today}
\thanks{Carlos D'Andrea is partially supported
by  the Research Project MTM2007--67493, Teresa Krick was partially
suported by ANPCyT PICT 33671/05, CONICET PIP 2010-2012 and UBACyT
grants, and Agnes Szanto was partially supported by NSF grants
CCR-0347506 and CCF-1217557.}\maketitle

\section{Introduction}
In \cite{DKS2006}  we presented Poisson-like formulas for
multivariate subresultants in terms of the roots of the system given
by all but one of the input polynomials, provided that all the roots
were {\em simple}, i.e. that the ideal generated by these polynomials
 is zero-dimensional and radical. Multivariate resultants
were mainly introduced by Macaulay in \cite{Mac2}, after earlier
work by Euler, Sylvester and Cayley, while multivariate
subresultants were first defined by Gonzalez-Vega in
\cite{Gon2,Gon}, generalizing Habicht's method \cite{Hab}. The
notion of subresultants that we use in this text was introduced by Chardin in
\cite{Cha}.

\smallskip Later on, in \cite{DHKS2007,DHKS2009}, we focused on the
classical univariate case and reworked the relation between
subresultants and  double Sylvester sums, always in the simple roots
case (where double sums are actually well-defined). This is also the
subject of the more recent articles \cite{RS11,KS12}. As one of the
referees of the MEGA'2007 conference pointed out to us, working out
these results for the case of polynomials with multiple roots would
also be interesting.

\smallskip This paper is a first attempt in that direction. We succeed
in describing Poisson like formulas for  univariate and multivariate
subresultants in the presence of multiple roots, as well as to
obtain formulas in roots in the univariate setting for subresultants
 of degree 1 and of degree immediately below the minimum of the degrees of the input polynomials:
 the two non-trivial extremal cases in the  sequence of subresultants.
We cannot generalize these formulas for other intermediate degrees,
and  it is still not clear for us which is the correct way of
generalizing Sylvester double sums in the multiple roots case.

\smallskip
The paper is organized as follows: In Section \ref{2} we recall the
definitions of the classical univariate subresultants and Sylvester
double sums, and of the generalized Wronskian and Vandermonde
matrices. We then show how the Poisson formulas obtained in
\cite{Hon1999} for the subresultants in the case of simple roots
extend to the multiple roots setting by means of these generalized
matrices. We also obtain formulas in roots for subresultants in the
two extremal non-trivial cases mentioned above. In Section \ref{3}
we present Poisson-like formulas for multivariate subresultants in
the case of multiple roots, generalizing our previous results
described in \cite{DKS2006}.

\smallskip

\begin{ac} We wish to thank the referee for her/his careful reading
and comments.  A preliminary version of these results was presented
at the MEGA 2009 Conference in Barcelona. Part of this work was done
at the Fields Institute in Toronto while the authors were
participating in the Fall 2009 Thematic Program on Foundations of
Computational Mathematics.
\end{ac}

\section{Univariate Case: Subresultants in multiple roots}\label{2}

\subsection{Notation}
We first establish a  notation that will make the
presentation of the problem and the state of the art simpler.

\smallskip
Set $d,e\in \N$ and  let $A:=\big(\alpha_1,\ldots,\alpha_d\big)$ and
$ B:=\big(\beta_1,\ldots, \beta_e\big)$  be two (ordered) sets of
$d$ and $e$ different indeterminates respectively.

\smallskip For $m,n\in \N$, set
$(d_1,\ldots,d_m)\in \N^m$ and $ (e_1,\ldots,e_n)\in \N^n$ such that
$d_1+\cdots +d_m=d $ and $  e_1+\cdots +e_n=e$, and let
$$\overline{A}:=\big((\alpha_1,d_1);\ldots; (\alpha_{m},d_m)\big)\quad
\mbox{and} \quad
\overline{B}:=\big((\beta_1,e_1);\ldots;(\beta_{n},e_n)\big)$$
(these will be regarded as ``limit sets'' of $A$ and $B$ when roots
are packed following the corresponding multiplicity patterns).

\smallskip
We associate to  $A$  and $B$  the
 monic polynomials $f$ and $g$  of degrees $d$ and $e$ respectively,
 and the set $R(A,B)$, where
\begin{align*}&f(x):=\prod_{i=1}^d(x-\alpha_i) \quad \mbox{and} \quad
g(x):=\prod_{j=1}^e(x-\beta_j),\\
&R(A,B)=\prod_{1\le i\le d,1\le j\le e}(\alpha_i-\beta_j),
=\prod_{1\le i\le d} g(\alpha_i)\end{align*} with natural limits
when the roots are packed \begin{align*}&
 \overline{f}(x):=\prod_{i=1}^{m}(x-\alpha_i)^{d_i} \quad \mbox{and}
 \quad
\ov g(x):=\prod_{j=1}^n(x-\beta_j)^{e_j} ,\\ &R(\ov A,\ov B)=
 \prod_{1\le i\le m,1\le j\le n}(\alpha_i-\beta_j)^{d_ie_j}=\prod_{1\le i\le m} \ov g(\alpha_i)^{d_i}.\end{align*}

 \subsection{Subresultants and Sylvester double sums}
We recall that for $0\le t\le d< e$ or $0\le t < d=e$, the $t$-th
subresultant of the polynomials $f=a_dx^d+\cdots + a_0$ and
$g=b_ex^e+\cdots +b_0$,  introduced by J.J.~Sylvester in
\cite{{Sylvester:1853}},  is defined as

\begin{equation*}\label{defsub}
\Sres _t(f,g)
:=\det%
\begin{array}{|cccccc|c}
\multicolumn{6}{c}{\scriptstyle{d+e-2t}}\\
\cline{1-6}
a_{d} & \cdots & & \cdots & a_{t+1-\left(e-t-1\right)}& x^{e-t-1}f(x)&\\
& \ddots & && \vdots  & \vdots &\scriptstyle{e-t}\\
&  &a_{d}&\cdots &a_{t+1}& x^0f(x)& \\
\cline{1-6}
b_{e} &\cdots & & \cdots & b_{t+1-(d-t-1)}&x^{d-t-1}g(x)&\\
&\ddots &&&\vdots & \vdots &\scriptstyle{d-t}\\
&& b_{e} &\cdots & b_{t+1} & x^0g(x)&\\
\cline{1-6} \multicolumn{2}{c}{}
\end{array}
\end{equation*}
with $a_{\ell}=b_{\ell}=0$ for $\ell<0$. When $t=0$ we have $\Sres
_t(f,g)=\Res(f,g)$.

\smallskip In the same article Sylvester also
introduced for $0\le p\le d, 0\le q\le e$ the following {\em
double-sum} expression in $A$ and $B$,

\[
\operatorname*{Sylv}\nolimits^{p,q}(A,B;x)  :=
\sum_{\substack{A^{\prime }\subset A,\,B^{\prime}\subset
B\\|A^{\prime}|=p,\,|B^{\prime}|=q}}R(x,A^{\prime
})\,R(x,B^{\prime})\,\frac{R(A^{\prime},B^{\prime})\,R(A\backslash
A^{\prime},B\backslash B^{\prime})}{R(A^{\prime},A\backslash
A^{\prime })\,R(B^{\prime},B\backslash B^{\prime})},\] where by
convention $R(A',B')=1$ if $A'=\emptyset$ or $B'=\emptyset$. For
instance
\begin{equation}\label{sylv0}\operatorname*{Sylv}\nolimits^{0,0}(A,B;x)=
R(A,B)=\prod_{1\le i\le d,1\le j\le e}(\alpha_i-\beta_j)=
\Res(f,g).\end{equation} We note that
$\operatorname*{Sylv}\nolimits^{p,q}(A,B;x) $ only makes sense when
$\alpha_i\ne \alpha_j$ and $\beta_i\ne \beta_j$ for $i\ne j$, since
otherwise some denominators in
$\operatorname*{Sylv}\nolimits^{p,q}(A,B;x)$ would vanish.
%For convenience, we extend this definition to any $p$ and $q$, setting
%$$\operatorname*{Sylv}\nolimits^{p,q}(A,B;x) :=0 \ \mbox{ when } p<0
%\mbox{ or } p>m \mbox{ or } q<0 \mbox{ or } q>n.$$

\smallskip

The following relation between these double sums and the
subresultants (for monic polynomials with simple roots $f$ and $g$)
was described by Sylvester:  for any choice of $0\le p\le d$ and
$0\le q\le e$ such that $t:=p+q$ satisfies $t<d\le e$ or $t=d<e$,
one has

\begin{equation}\label{sylvester}\Sres_t(f,g)= (-1)^{p(d-t)}{t\choose
p}^{-1}\operatorname*{Sylv}\nolimits^{p,q}(A,B;x)  . \end{equation}

This gives an expression for the subresultant in terms of the
differences of the roots ---generalizing the well-known formula
\eqref{sylv0}--- {\em in case $f$ and $g$ have only simple roots}.
However, when the roots are packed, i.e. when we deal with $\ov A$
and $\ov B$, the expression for the resultant is stable, i.e.
$$\Res(\overline{f},\overline{g})
=\prod_{1\leq i\leq m,\,1\leq j\leq n}(\alpha_i-\beta_j)^{d_ie_j},$$
while not only there is no simple expression of what $\Sres_t(\ov
f,\ov g)$ is in terms of differences of roots but moreover there is
no simple definition of what
$\operatorname*{Sylv}\nolimits^{p,q}(\ov A,\ov B;x)$ should be in
order to preserve Identity~(\ref{sylvester}). Of course, since
$\Sres_t(\ov f,\ov g)$ is defined anyway,
$\operatorname*{Sylv}\nolimits^{p,q}(\ov A,\ov B;x)$ could be
defined as the result
$$\operatorname*{Sylv}\nolimits^{p,q}(\ov A,\ov B;x):= (-1)^{p(d-t)}{t\choose
p} \Sres_t(\ov f,\ov g)$$ but this is not quite satisfactory because
on one hand this does not clarify how $\Sres_t$ behaves in terms of
the roots when these are packed, and on the other hand,
$\operatorname*{Sylv}\nolimits^{p,q}(\ov A,\ov B;x)$ is defined for
every $0\le p\le d$ and $ 0\le q\le e$ while $\Sres_t$ is only
defined for $t:=p+q\le \min\{d,e\}$.

\smallskip
In what follows we express some particular cases
of the subresultant of  two univariate polynomials  in terms of the roots of the polynomials, when these
polynomials have multiple roots. These are partial answers to
the questions raised above, since we were not able to give a right
expression for what the Sylvester double sums should be, even in the
particular cases we could consider. Nevertheless the results we
obtained give a hint of how complex it can be to give complete
general answers, at least in terms of double or multiple sums, see
Theorem~\ref{d=1} below.

\subsection{Generalized Vandermonde and Wronskian matrices}
We need to recall some facts on generalized Vandermonde and
Wronskian matrices.
\begin{notation}\label{gVm} Set $u\in \N$. The {\em generalized Vandermonde}
or {\em confluent} (non-necessarily square) $u\times d$ matrix
$V_u(\overline A)$ associated to $\overline
A=\big((\alpha_1,d_1);\dots;(\alpha_m,d_m)\big) $, \cite{Kal1984},
is
$$
V_u(\overline
A)=V_u\big((\alpha_1,d_1);\ldots;(\alpha_m,d_m)\big):=\begin{array}{|c|c|c|c}
\multicolumn{3}{c}{\scriptstyle{d}}\\
\cline{1-3} & & & \\ V_u(\alpha_1,d_1) & \dots &V_u(\alpha_m,d_m)
 &\scriptstyle u\\ & & & \\
 \cline{1-3}
 \multicolumn{2}{c}{}
\end{array},
$$
where
$$
V_u(\alpha_i,d_i):=\begin{array}{|ccccc|c}
\multicolumn{5}{c}{\scriptstyle{d_i}}\\
\cline{1-5}
1&0 &0&\dots & 0 &\\
\alpha_i&1 &0&\dots & 0 &\\
\alpha^2_i&2\alpha_i &1&\dots & 0 &\scriptstyle u\\
\vdots&  \vdots& \vdots & &\vdots &\\
\alpha_i^{u-1}&(u-1)\alpha_i^{u-2} &{u-1\choose 2}\alpha_i^{u-3}&\dots & {u-1\choose d_i-1}\alpha_i^{u-d_i}& \\
 \cline{1-5}
 \multicolumn{2}{c}{}
\end{array}
$$
with the convention that when $k<j$, ${k\choose j}\alpha_i^{k-j}=0$.

\smallskip \noindent
When $d_i=1$ for all $i$, this gives the usual Vandermonde matrix
$V_u(A)$. When $u=d$, we omit the sub-index $u$ and write
$V(\overline A)$ and $V(A)$.
\end{notation}

For example
$$V\big((\alpha,3);(\beta,2)\big)=\left[\begin{array}{ccc|cc}
1&0&0&1&0\\
\alpha&1&0&\beta&1\\
\alpha^2&2\alpha&1&\beta^2&2\beta\\
\alpha^3&3\alpha^2&3\alpha&\beta^3&3\beta^2\\
\alpha^4&4\alpha^3& 6\alpha^2&\beta^4& 4\beta^3
                       \end{array}
\right]$$ and
$$V_3((\alpha,3);(\beta,2))=\left[\begin{array}{ccccc}
1&0&0&1&0\\
\alpha&1&0&\beta&1\\
\alpha^2&2\alpha&1&\beta^2&2\beta                    \end{array}
\right].$$

\smallskip The determinant of a square confluent matrix is non-zero,
and satisfies, \cite{Aitken},

\begin{equation*}\label{gVd}
\det\big(V(\overline A)\big)=\prod_{1\leq i<j\leq m}
(\alpha_j-\alpha_i)^{d_id_j}.
\end{equation*}

 In the same way that the usual Vandermonde matrix
$V(A)$ is related to the Lagrange Interpolation Problem on $A$, the
generalized Vandermonde matrix $V(\ov A)$ is associated with the
Hermite Interpolation Problem on $\ov A$ \cite{Kal1984}: Given
$\{y_{i,j_i},1\le i\le m, 0\le j_i< d_i\}$, there exists a unique
polynomial $p$ of degree $\deg(p)<d$ which satisfies the following
conditions: {\small
$$\left\{\begin{array}{llll}p(\alpha_1)=0!\,y_{1,0},&
p'(\alpha_1)=1!\,y_{1,1}, & \dots \ , &
  p^{(d_1-1)}(\alpha_1)=(d_1-1)!\,y_{1,d_1-1},\\
\ \ \ \vdots & \ \ \ \vdots  &\ \  \vdots &\ \ \ \vdots \\
p(\alpha_m)=0!\,y_{m,0},&  p'(\alpha_m)=1!\,y_{m,1}, & \dots \ , &
p^{(d_m-1)}(\alpha_m)=(d_m-1)!\,y_{m,d_m-1}.\end{array}\right.$$}
This Hermite polynomial $p=a_0+a_1x+\cdots +a_{d-1}x^{d-1}$ is given
by the only solution of
$$ (a_0 \ a_1 \,\dots \,a_{d-1}) \cdot V(\ov A) = (y_{1,0}\ y_{1,1} \, \dots\, y_{m,d_m-1}) $$
(here the right vector is indexed by the pairs $(i,j_i)$ for $1\le
i\le m, 0\le j_i< d_i$) and satisfies
\begin{equation}\label{hhermite}
\det\big(V(\ov A)\big) \,p(x)=-\,
\det\begin{array}{|cccc|c|l}\multicolumn{1}{c}{}&\multicolumn{1}{c}{}&
\multicolumn{1}{c}{\scriptstyle{d}}&\multicolumn{1}{c}{}&\multicolumn{1}{c}{
\scriptstyle{1}}&
\\\cline{1-5}
&&&&1&\\
&&&&x&\\
&&V(\ov A)&&\vdots&{\scriptstyle{d}}\\
&&&&x^{d-1}&\\
\cline{1-5}  y_{1,0}&y_{1,1}&\ldots&y_{m, d_m-1}&0&{\scriptstyle{1}}
                     \\ \cline{1-5}  \multicolumn{2}{c}{} \end{array}\
 .
\end{equation}

The polynomial  $p$ can also be viewed in a more suitable basis,
where the corresponding ``Vandermonde" matrix has more structure. We
introduce the $d$ polynomials in this basis.

\begin{notation} \label{fiki}
For $ 1\le i\le m$  we set
$$\ov f_i:= \prod_{j\ne i}
(x-\alpha_j)^{d_j}$$ and, for  $  0\le k_i< d_i$,
$$\ov f_{i,k_i}:= \frac{\ov f}{(x-\alpha_i)^{d_i-k_i}}\ = \
(x-\alpha_i)^{k_i}\ov f_i.$$
\end{notation}
%A simple computation, applying  Leibnitz rule for the derivative of
%a product of polynomials:
%$$(fg)^{(q)}=
%\sum_{j=0}^{q}{q\choose j}f^{(j)}\,g^{(q-j)},$$ gives the following
%formula for the  derivatives of $f_{i,k}$, $1\le i\le m$,   $0\le
%k<d_i$, specialized into $\alpha_\ell$:%%%%%
%
%$$ \overline{f}_{i,k}^{(q)}(\alpha_j)=\left\{\begin{array}{cll}0
%&\mbox{for}& j\ne i \ \mbox{ and } \ 0\le q<d_j,\\
%0 &\mbox{for}& j= i \ \mbox{ and } \ 0\le q<k,\\
%\frac{q!}{(q-k)!}\,\ov f_i^{(q-k)}(\alpha_i)&\mbox{for}& j= i \
%\mbox{ and } \ k\le q<d_i.\end{array}\right. $$
%%
%
%This implies that

Then, in this basis, the polynomial $p=\sum_{i,k_i} a_{i,k_i}\ov
f_{i,k_i}$ is given by the only solution of
$$ (a_{1,0} \ a_{1,1} \,\dots \,a_{m,d_m-1}) \cdot V'(\ov A) = (y_{1,0}\ y_{1,1} \, \dots\, y_{m,d_m-1}) $$
where
$$V'(\ov A):=\begin{array}{|c|c|c|c}
\multicolumn{1}{c}{\scriptstyle{d_1}}&\multicolumn{1}{c}{\scriptstyle{}}&
\multicolumn{1}{c}{\scriptstyle{d_m}}&
\\
\cline{1-3} V'(\alpha_1,d_1)& \mathbf{0}& \mathbf{0}&\scriptstyle{d_1} \\
\cline{1-3} \mathbf{0} & \ddots &\mathbf{0}
 & \scriptstyle{}\\ \cline{1-3}\mathbf{0}& \mathbf{0}& V'(\alpha_m,d_m)&\scriptstyle{d_m}  \\
 \cline{1-3}
 \multicolumn{2}{c}{}
\end{array},
$$
with
$$
V'(\alpha_i,d_i):=\begin{array}{|cccc|c}
\multicolumn{4}{c}{\scriptstyle{d_i}}\\
\cline{1-4} \ov f_{i}(\alpha_i)&\ov f_{i}'(\alpha_i) &\dots &
\frac{\ov f_{i}^{(d_i-1)}(\alpha_i)}{(d_i-1)!} &\\
0&\ov f_{i}(\alpha_i)&\dots & \frac{\ov
f_{i}^{(d_i-2)}(\alpha_i)}{(d_i-2)!}  &
\\
0&0 &\ddots & \vdots &\scriptstyle{d_i}\\
\vdots&  \vdots&  &\vdots &\\
0&0 & \dots& \ov f_{i}(\alpha_i)& \\
 \cline{1-4}
 \multicolumn{2}{c}{}
\end{array}
$$
and satisfies
\begin{equation*}\label{hhermitebis}
\det\big(V'(\ov A)\big) \,p(x)=-\,
\det\begin{array}{|cccc|c|l}\multicolumn{1}{c}{}&\multicolumn{1}{c}{}&
\multicolumn{1}{c}{\scriptstyle{d}}&\multicolumn{1}{c}{}&\multicolumn{1}{c}{
\scriptstyle{1}}&
\\\cline{1-5}
&&&&\ov f_{1,0}&\\
&&&&\ov f_{1,1}&\\
&&V'(\ov A)&&\vdots&{\scriptstyle{d}}\\
&&&&\ov f_{m,d_m-1}&\\
\cline{1-5}  y_{1,0}&y_{1,1}&\ldots&y_{m, d_m-1}&0&{\scriptstyle{1}}
                     \\ \cline{1-5} \multicolumn{1}{c}{}\end{array}\
.
\end{equation*}
We note that \begin{multline*}\det(V'(\ov A)) =\prod_{1\le i\le
m}\ov f_i(\alpha_i)^{d_i}\\  = (-1)^{\frac{m(m-1)}{2}}\Big(
\prod_{1\leq i<j\leq m} (\alpha_j-\alpha_i)^{d_id_j}\Big)^2 =
(-1)^{\frac{m(m-1)}{2}}\det(V(\ov A))^2.\end{multline*}

 In particular $p=\sum_{i,j_i}
y_{i,j_i}p_{i,j_i}$ where for $1\le i\le m,\,0\le j_i< d_i$, the
{\em basic Hermite polynomials} $p_{i,j_i}$ are the unique
polynomials of degree $\deg(p_{i,j_i})<d$ determined by the
conditions for $ 1\le \ell\le m, \, 0\le q_\ell< d_\ell$,
\begin{equation}\label{condicion}\left\{\begin{array}{l}
p^{(q_\ell )}_{i,j_i}(\alpha_\ell)=j_i! \ \mbox{ for }  \ \ell=i \mbox{ and } q_\ell=j_i,\\[1mm]
p^{(q_\ell)}_{i,j_i}(\alpha_\ell)=0 \ \mbox{
otherwise}.\end{array}\right.\end{equation}

When  $\ov A=A=\big(\alpha_1,\dots,\alpha_d)$, then, denoting
 $f_i:=\ov f_i$, we have
$$p_{i,0}=\prod_{\ell\ne i}\frac{x-\alpha_\ell}{\alpha_i-\alpha_\ell}=\frac{1}{ f_i(\alpha_i)}\,f_i
\quad \mbox{for} \quad 1\le i\le d,$$ while for $\ov A=(\alpha,d)$,
$$p_{1,j}= (x-\alpha)^{j}=\ov f_{1,j} \quad \mbox{for}\quad  0\le j<
d.$$

The following proposition generalizes these two extremal formulas.

\begin{proposition} \label{piji}
Fix $1\le i\le m$ and $0\le j< d_i$. Then
\begin{equation*}
p_{i,j}= \frac{1}{\ov f_i(\alpha_i)}\,\sum_{k=0}^{d_i-1-j}(-1)^k
\left(\sum_{k_1+\ldots+\widehat{k_i}+\ldots+k_{m}=k} \prod_{\ell\neq
i}\frac{{d_\ell-1+k_\ell\choose k_\ell}}
   {(\alpha_i-\alpha_\ell)^{k_\ell}}\right) \ov f_{i,j+k}
\end{equation*}
where $k_1+\dots +\widehat k_i + \dots + k_m$ denotes the sum
without $k_i$. (When $m=1$,  the right expression under brackets is
understood to equal $1$ for $k=0$ and $0$ otherwise.)
\end{proposition}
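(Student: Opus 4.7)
The plan is to reduce the determination of $p_{i,j}$ to computing a truncated Taylor series of $1/\ov f_i$ at $\alpha_i$, and then to expand that series explicitly via the generalized binomial theorem.

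First I would observe that the vanishing conditions \eqref{condicion} at the points $\alpha_\ell$ for $\ell\neq i$ force $(x-\alpha_\ell)^{d_\ell}$ to divide $p_{i,j}$ for every such $\ell$. Since $\deg(p_{i,j})<d$, this forces
\[
p_{i,j}(x)=\ov f_i(x)\,h(x)
\]
for a unique polynomial $h$ with $\deg(h)<d_i$. The remaining conditions at $\alpha_i$ (namely $p^{(q)}_{i,j}(\alpha_i)=0$ for $q\neq j$, $q<d_i$, and $p^{(j)}_{i,j}(\alpha_i)=j!$) are equivalent to the single congruence
\[
p_{i,j}(x)\equiv (x-\alpha_i)^j \pmod{(x-\alpha_i)^{d_i}},
\]
because the left side then has the prescribed Taylor expansion at $\alpha_i$ up to order $d_i-1$.

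Since $\ov f_i(\alpha_i)\neq 0$, the series $1/\ov f_i(x)$ is well-defined at $\alpha_i$, and the last congruence is equivalent to
\[
h(x)\equiv (x-\alpha_i)^j\cdot\frac{1}{\ov f_i(x)} \pmod{(x-\alpha_i)^{d_i}}.
\]
Writing the Taylor polynomial of $1/\ov f_i(x)$ at $\alpha_i$ up to order $d_i-1-j$ as $\sum_{k=0}^{d_i-1-j} g_k\,(x-\alpha_i)^k$, we obtain
\[
p_{i,j}(x)=\ov f_i(x)\sum_{k=0}^{d_i-1-j} g_k\,(x-\alpha_i)^{j+k}=\sum_{k=0}^{d_i-1-j} g_k\,\ov f_{i,j+k}(x)
\]
by Notation~\ref{fiki}. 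So the statement reduces to identifying the coefficient $g_k$ with the expression in brackets divided by $\ov f_i(\alpha_i)$.

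For that, I would expand each factor of $1/\ov f_i(x)=\prod_{\ell\neq i}(x-\alpha_\ell)^{-d_\ell}$ at $\alpha_i$ via the generalized binomial theorem: writing $x-\alpha_\ell=(\alpha_i-\alpha_\ell)\bigl(1-\tfrac{\alpha_i-x}{\alpha_i-\alpha_\ell}\bigr)$ and using $(1-y)^{-d_\ell}=\sum_{k_\ell\ge 0}\binom{d_\ell-1+k_\ell}{k_\ell}y^{k_\ell}$, one gets
\[
\frac{1}{(x-\alpha_\ell)^{d_\ell}}=\frac{1}{(\alpha_i-\alpha_\ell)^{d_\ell}}\sum_{k_\ell\ge 0}(-1)^{k_\ell}\binom{d_\ell-1+k_\ell}{k_\ell}\frac{(x-\alpha_i)^{k_\ell}}{(\alpha_i-\alpha_\ell)^{k_\ell}}.
\]
Multiplying these series over $\ell\neq i$ and using $\prod_{\ell\neq i}(\alpha_i-\alpha_\ell)^{-d_\ell}=1/\ov f_i(\alpha_i)$, the coefficient of $(x-\alpha_i)^k$ collects precisely the compositions $k_1+\cdots+\widehat{k_i}+\cdots+k_m=k$ appearing in the claim. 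Reading off $g_k$ completes the proof; the case $m=1$ is just the degenerate product (empty sum except at $k=0$), matching the stated convention. No step is really an obstacle here — the only thing to be careful about is the bookkeeping of signs and the conversion between the conditions on derivatives and the Taylor congruence at $\alpha_i$.
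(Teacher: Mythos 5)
Your proof is correct and follows essentially the same route as the paper's: both arguments come down to identifying the coefficients of $p_{i,j}$ in the basis $\{\ov f_{i,j+k}\}_k$ with the Taylor coefficients of $1/\ov f_i$ at $\alpha_i$. The only differences are cosmetic — the paper quotes the first reduction, $p_{i,j}=\sum_{k=0}^{d_i-1-j}\frac{1}{k!}\bigl(\frac{1}{\ov f_i}\bigr)^{(k)}(\alpha_i)\,\ov f_{i,j+k}$, from Spitzbart's Hermite interpolation formula \cite[Th.~1]{S60} where you derive it directly from the divisibility and congruence conditions, and it then extracts the Taylor coefficients via the Leibniz rule where you multiply out negative binomial series; both computations yield the identical bracketed expression.
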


  \begin{proof}
  Applying for instance  \cite[Th.~1]{S60}, we first remark that
\begin{equation*}\label{piij}
p_{i,j}= \sum_{k=0}^{d_i-1-j}\frac{1}{k!}\left(\frac{1}{\ov
f_i}\right)^{(k)}(\alpha_i)\,\ov f_{i,j+k}.
\end{equation*}
%verifying that the right-hand side expression satisfies
%Conditions~(\ref{condicion}). By Lemma~\ref{derivatives},   for
%$\ell\ne i$ and $0\le q<d_\ell$ or $\ell=i$ and $0\le q<j$,
%$p_{i,j}^{(q)}(\alpha_\ell)=0$. It remains to show that
%$p^{(j)}_{i,j}(\alpha_i)=j!$ and
%$p^{(q)}_{i,j}(\alpha_i)=0$ for $j<q<d_i$.\\
%But
%$$\ov f_{i,j+k}^{(q)}(\alpha_i)= {q \choose j+k} (j+k)!\,\ov f_i^{(q-j-k)}(\alpha_i) \ \ \mbox{when}
% \ \ j+k\le q,\ \mbox{i.e.  when } \ k\le q-j,
%$$ while $\ov f_{i,j+k}^{(q)}(\alpha_i)= 0$ when $k>q-j$.
%This implies
%\begin{align*}p^{(q)}_{i,j}(\alpha_i)&=\sum_{k=0}^{d_i-1-j}\frac{1}{k!}\left(\frac{1}{\ov
%f_i}\right)^{(k)}(\alpha_i)\,\ov f_{i,j+k}^{(q)}(\alpha_i)\\
%&=\sum_{k=0}^{q-j}\frac{q!}{k!(q-j-k)!}\left(\frac{1}{\ov
%f_i}\right)^{(k)}(\alpha_i)\,\ov f_i^{(q-j-k)}(\alpha_i)\\
%=&\frac{q!}{(q-j)!}\sum_{k=0}^{q-j}{q-j\choose k}\left(\frac{1}{\ov
%f_i}\right)^{(k)}(\alpha_i)\,\ov f_i^{(q-j-k)}(\alpha_i)\\
%=&\frac{q!}{(q-j)!}\left(\frac{\ov f_i}{\ov
%f_i}\right)^{(q-j)}(\alpha_i) \quad = \quad
%\left\{\begin{array}{ccl}j!&\mbox{if}&q=j\\0&\mbox{if}&q>j\end{array}\right..
%\end{align*}
Then we plug into the expression  the following, given by Leibnitz
rule for the derivative of a product:
$$\left(\frac{1}{\ov f_i}\right)^{(k)}(\alpha_i)=
(-1)^k\,k!\,\sum_{k_1+\ldots+\widehat{k_i}+\ldots+k_{r}=k}
\prod_{\ell\neq i}\frac{{d_\ell-1+k_\ell\choose k_\ell}}
   {(\alpha_i-\alpha_\ell)^{d_\ell+k_\ell}}.
$$
\end{proof}

The basic Hermite polynomials enable us to  compute the inverse of
the confluent matrix $V(\overline A)$:
 \begin{equation*}\label{inversa}V(\ov A)^{-1}=\begin{array}{|ccc|c}
\multicolumn{3}{c}{\scriptstyle{d}}\\
\cline{1-3} &V^*_1&& {\scriptstyle{d_1}} \\\cline{1-3} & \vdots &&
 \\ \cline{1-3} &V^*_m& & {\scriptstyle{d_m}}  \\
 \cline{1-3}
 \multicolumn{1}{c}{}
\end{array} \quad\mbox{where} \quad V^*_i:=\begin{array}{|c|c}
\multicolumn{1}{c}{\scriptstyle{d}}\\
\cline{1-1} \mbox{coefficients of } p_{i,1}&  \\ \vdots &
 \scriptstyle d_i\\ \mbox{coefficients of } p_{i,d_i}&  \\
 \cline{1-1}
 \multicolumn{1}{c}{}
\end{array},\ 1\le i\le r
\end{equation*} (here  the coefficients of $p_{i,j_i}(x)$ are written in the monomial
basis $1,x,\ldots x^{d-1}$).

\medskip Now we  set the notation for a slight modification  of a
case of generalized Wronskian matrices.

\begin{notation}\label{gW} Set $u\in \N$.
Given a polynomial $h(z)$,  the {\em generalized Wronskian}
 (non-necessarily square) $u\times d$ matrix $W_{h,u}(\overline A)$
associated to $\overline
A=\big((\alpha_1,d_1);\dots;(\alpha_m,d_m)\big) $ is

{\small
$$
W_{h,u}(\overline
A)=W_{h,u}\big((\alpha_1,d_1);\ldots;(\alpha_m,d_m)\big):=\begin{array}{|c|c|c|c}
\multicolumn{1}{c}{}&\multicolumn{1}{c}{\scriptstyle{d}}& \multicolumn{1}{c}{}& \\
\cline{1-3} & & & \\ W_{h,u}(\alpha_1,d_1) & \dots
&W_{h,u}(\alpha_m,d_m)
 &\scriptstyle u\\ & & & \\
 \cline{1-3}
 \multicolumn{2}{c}{}
\end{array},
$$}
where

$$
W_{h, u}(\alpha_i,d_i):=\begin{array}{|cccc|c}
\multicolumn{4}{c}{\scriptstyle{d_i}}\\
\cline{1-4}
h(\alpha_i)&h'(\alpha_i) &\dots & \frac{h^{(d_i-1)}(\alpha_i)}{(d_i-1)!} &\\
(zh)(\alpha_i)&(zh)'(\alpha_i) &\dots & \frac{(zh)^{(d_i-1)}(\alpha_i)}{(d_i-1)!} &\\
\vdots&  \vdots& &\vdots &\scriptstyle u\\
(z^{u-1}h)(\alpha_i)&(z^{u-1}h)'(\alpha_i) &\dots & \frac{(z^{u-1}h)^{(d_i-1)}(\alpha_i)}{(d_i-1)!} & \\
 \cline{1-4}
 \multicolumn{2}{c}{}
\end{array}\ .
$$
When $u=d$, we omit the sub-index $u$ and write $W_h(\overline A)$.
\end{notation}

For example for $h(z)=x-z$ and $\ov A=(\alpha,3)$,
$$W_{x-z}(\alpha,3)=W_{x-z,3}(\alpha,3)=\begin{array}{|ccc|c}
\multicolumn{4}{c}{\scriptstyle{3}}\\
\cline{1-3}
x-\alpha&-1 &0 & \\
\alpha x-\alpha^2& x-2\alpha&-1&  \scriptstyle 3\\
\alpha^2 x-\alpha^3& 2\alpha x-3\alpha^2& x-3\alpha& \\
 \cline{1-3}
 \multicolumn{2}{c}{}
\end{array}\ .
$$
 The determinant
of a square Wronskian matrix is easily obtainable performing row
operations in the case of one block, and by induction in the size of
the matrix in general:

\begin{equation*}\label{gWd}
\det\big(W_h(\ov A)\big)= \Big(\prod_{1\leq i<j\leq m}
(\alpha_j-\alpha_i)^{d_id_j}\Big) h(\alpha_1)^{d_1}\cdots
h(\alpha_m)^{d_m}.
\end{equation*}

\subsection{Subresultants in multiple roots}
In this section, we describe   explicit formulas  we can get for the
non-trivial  extremal cases of subresultants in terms of both sets
of roots of $\ov f=(x-\alpha_1)^{d_1}\cdots (x-\alpha_m)^{d_m}$ and
$\ov g=(x-\beta_1)^{e_1}\cdots (x-\beta_n)^{e_n}$ with
$d=\sum_{i=1}^m d_i$ and $e=\sum_{j=1}^n e_j$. More precisely, we
present formulas for $\Sres_t(\ov f,\ov g)$  for the cases $t=d-1<e$
(Proposition \ref{t=d-1} below) and $t=1<d\le e$ (Theorem
\ref{d=1}). We will derive them from Theorem~\ref{reph} below, a
generalization of \cite[Th.~3.1]{Hon1999} and
\cite[Lem.~2]{DHKS2007} which includes the multiple roots case (and
is also strongly related to a multiple roots case version of
\cite[Th.~1]{DHKS2009}). The main drawback of this approach to
obtain formulas for all cases of $t$ is the fact that submatrices of
generalized Vandermonde matrices are not always generalized
Vandermonde matrices, so in general their determinants cannot be
expressed as products of differences. This is why the search for
nice formulas in double sums in the case of multiple roots is more
challenging.

\begin{theorem}\label{reph} Set $0\le t\le d< e$ or $0\le t < d=e$.
Then

{ \begin{multline*}  \Sres_t(\ov f,\ov g)    =
(-1)^{d-t}\,\det\big(V(\ov A)\big)^{-1}\,
 \det \
\begin{array}{|c|c|c}
\multicolumn{1}{c}{ \scriptstyle{d}}&\multicolumn{1}{c}{ \scriptstyle{1}}\\
  \cline{1-2}&1&\scriptstyle{}\\
  V_{t+1}(\ov A)& \vdots &\scriptstyle{t+1}\\
  &x^t&\scriptstyle{}\\
\cline{1-2}\  W_{\ov g,d-t}(\ov A)
 &\mathbf{0} &\scriptstyle{d-t}\\
\cline{1-2} \multicolumn{2}{c}{}
\end{array}
\\=   (-1)^{c} \,\det\big(V(\ov A)\big)^{-1} \,\det\big(V(\ov B)\big)^{-1}\, \det
\begin{array}{|c|c|c|c}
\multicolumn{1}{c}{ \scriptstyle{d}}&\multicolumn{1}{c}{ \scriptstyle{e}}&\multicolumn{1}{c}{ \scriptstyle{1}}&\\
\cline{1-3}
&  &1&\\
  V_{t+1}(\ov A)& \mathbf{0} &\vdots &\scriptstyle{t+1}\\
  &  &x^t&\\
\cline{1-3} &  & &\\V_{d+e-t}(\ov A)&V_{d+e-t}(\ov B)&\mathbf{0}&\scriptstyle{d+e-t}\\
&  & &\\
\cline{1-3} \multicolumn{3}{c}{}
\end{array}
, \end{multline*}} where $c:=\max\{e\!\pmod 2,d-t\!\pmod 2\}$.
\end{theorem}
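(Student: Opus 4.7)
My strategy is to prove Theorem \ref{reph} by reducing both identities to the corresponding Poisson formulas in the simple-root case (Hong's Theorem 3.1 in \cite{Hon1999} for the first identity, and a joint-Vandermonde variant derivable from it for the second), via a specialization argument that exploits the fact that confluent Vandermonde and generalized Wronskian matrices arise as limits of classical Vandermonde matrices under divided-difference column operations.

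I would first perturb: for each multiple root $\alpha_i$ of multiplicity $d_i$, introduce a parameter $\epsilon>0$ and replace $\alpha_i$ by the $d_i$ distinct simple roots $\alpha_i+k\epsilon$ for $k=0,\ldots,d_i-1$, and similarly for each $\beta_j$. This produces simple-root configurations $A_\epsilon$ and $B_\epsilon$, with associated monic polynomials $f_\epsilon,g_\epsilon$ whose coefficients converge to those of $\ov f,\ov g$ as $\epsilon\to 0$; in particular $\Sres_t(f_\epsilon,g_\epsilon)(x)\to\Sres_t(\ov f,\ov g)(x)$ coefficient-wise. For each $\epsilon>0$, Hong's formula gives
\begin{equation*}
\Sres_t(f_\epsilon,g_\epsilon) \ =\ (-1)^{d-t}\,\det(V(A_\epsilon))^{-1}\,\det\begin{pmatrix} V_{t+1}(A_\epsilon) & v(x) \\ W_{g_\epsilon,d-t}(A_\epsilon) & 0 \end{pmatrix},
\end{equation*}
where $v(x)=(1,x,\ldots,x^t)^T$.

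Next, apply a column transformation $T_\epsilon$ that is block-diagonal across the $m$ clusters: within the block of $d_i$ columns indexed by $\alpha_i+k\epsilon$, iteratively replace the $(k+1)$-th column by $1/(k!\,\epsilon^k)$ times its $k$-th order finite difference with the preceding columns in the cluster. Then $T_\epsilon$ is block-upper-triangular with $\det(T_\epsilon)$ depending only on $\epsilon$ and the multiplicity pattern, and---being applied simultaneously to the $d$ columns of $V(A_\epsilon)$ and to the first $d$ columns of the numerator matrix (it acts as the identity on the polynomial column)---the factor $\det(T_\epsilon)$ cancels in the ratio. The key analytic fact is that the $j$-th divided difference of a polynomial $h$ at the points $\alpha_i,\alpha_i+\epsilon,\ldots,\alpha_i+j\epsilon$ converges as $\epsilon\to 0$ to $h^{(j)}(\alpha_i)/j!$. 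Applied to $h(z)=z^k$, this yields $V_u(A_\epsilon)T_\epsilon\to V_u(\ov A)$; applied to $h(z)=z^k g_\epsilon(z)$ (using $g_\epsilon\to\ov g$ coefficient-wise), it yields $W_{g_\epsilon,u}(A_\epsilon)T_\epsilon\to W_{\ov g,u}(\ov A)$, the entries of the generalized Wronskian matching exactly by Notation~\ref{gW}. Passing to the limit in the transformed Hong identity produces the first formula.

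For the second formula, the same specialization argument applies once its simple-roots version is established. That version follows from Hong's formula by means of a block matrix identity expressing $W_{g,d-t}(A)=C\cdot V_{d+e-t}(A)$, where $C$ is the $(d-t)\times(d+e-t)$ banded matrix whose entries are the coefficients of $g$ along its diagonals, together with the observation that these coefficients of $g$ are recoverable from $V(B)^{-1}V_{d+e-t}(B)$ (since $g$ vanishes on $B$). A Schur-complement block determinant computation then replaces the single Vandermonde structure by the double Vandermonde one on the right-hand side of the second formula, and the sign $c=\max\{e\!\pmod 2,d-t\!\pmod 2\}$ emerges from the parity of the block-row permutations needed to realign the blocks. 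The limit argument is then repeated with divided-difference transformations applied to both the $\alpha$- and $\beta$-clusters simultaneously. The main technical obstacle is the compatibility of $T_\epsilon$ with the Wronskian structure: one must verify that the same transformation that produces $V_u(\ov A)$ from $V_u(A_\epsilon)$ also produces $W_{\ov g,u}(\ov A)$ from $W_{g_\epsilon,u}(A_\epsilon)$, which rests on the generalized Leibniz rule applied to $z^k g_\epsilon(z)$. A secondary obstacle is the precise sign bookkeeping for $c$ in the simple-roots Schur-complement derivation of the second formula.
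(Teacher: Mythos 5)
Your strategy is sound, but it is genuinely different from the paper's. The paper never takes a limit: following the scheme of Lemmas 1--3 of \cite{DHKS2007}, it writes $\Sres_t(\ov f,\ov g)=(-1)^{(e-t)(d-t)}\det(S_t)$ and multiplies $S_t$ on the right by a block matrix built from $V_{d+e-t}(\ov A)$ and $\Id_{e-t}$; the product is block-triangular because the identities $M_{\ov g}\,V_{d+e-t}(\ov A)=W_{\ov g,d-t}(\ov A)$ and $M_{\ov g}\,V_{d+e-t}(\ov B)=0$ (since $\ov g$ vanishes at each $\beta_j$ to order $e_j$) hold \emph{verbatim} for the confluent matrices, so the whole DHKS computation carries over word for word. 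You instead import the simple-root statements (Hong's formula for the first identity; for the second, what you re-derive via the banded matrix $C=M_{\ov g}$ and a Schur complement is essentially Lemma 3 of \cite{DHKS2007}, which you could simply cite) and degenerate: cluster the roots, apply divided-difference column operations $T_\epsilon$ whose determinant cancels between numerator and denominator, and let $\epsilon\to 0$. This works: divided differences at coalescing equispaced nodes converge to $h^{(k)}(\alpha_i)/k!$, $\Sres_t$ is polynomial in the coefficients of $f_\epsilon,g_\epsilon$ so it passes to the limit, and $\det V(\ov A)\neq 0$ keeps the ratio well defined. Two points you should make explicit to close the argument: since the $\alpha_i,\beta_j$ are indeterminates, after the analytic limit you must invoke Zariski density (both sides are rational in the roots and agree at every complex specialization with distinct $\alpha_i$'s and $\beta_j$'s); and the sign $c$ is not produced by your limit but inherited from the simple-root version, so it must be pinned down there. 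In exchange for this limiting and density machinery, your route avoids re-verifying any matrix identity in the confluent setting, whereas the paper's route is purely algebraic and uniform in the multiplicity pattern.
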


\begin{proof} The proof is quite similar to the proofs of Lemmas 2
and 3 in \cite{DHKS2007}, replacing the usual Vandermonde and
Wronskian matrices by their generalized counterparts. We will thus
omit the intermediate computations.
\\
Let $\ov f=\sum_{i=0}^d a_ix^i$, where $a_d=1$, and $\ov
g=\sum_{j=0}^e b_jx_i$, where $b_e=1$. We introduce the following
matrices of  \cite{DHKS2007}: {\small
\[\begin{array}{ccc}
M_{\ov f}:=
\begin{array}{|ccccc|c}
\multicolumn{5}{c}{\scriptstyle{d+e-t}}\\
\cline{1-5}
a_{0} & \dots & a_{d} &  & &\\
& \ddots &  & \ddots & &\scriptstyle{e-t}\\
&  & a_{0} & \dots & a_{d} & \\
\cline{1-5} \multicolumn{2}{c}{}
\end{array},
& & M_{\ov g}:=
\begin{array}{|ccccc|c}
\multicolumn{5}{c}{\scriptstyle{d+e-t}}\\
\cline{1-5}
b_{0} & \dots & b_{e} &  & &\\
& \ddots &  & \ddots & &\scriptstyle{d-t}\\
&  & b_{0} & \dots & b_{e} & \\
\cline{1-5} \multicolumn{2}{c}{}
\end{array}
\end{array}.
\]}
and {\small $$S_{t}:=%
\begin{array}{|c|c}
\multicolumn{1}{c}{ \scriptstyle{d+e-t}}&\\
\cline{1-1}
\ \ M_{x-z}\ \ &\scriptstyle{t}\\
\cline{1-1}
M_{\ov f}&\scriptstyle{e-t}\\
\cline{1-1}
M_{\ov g}&\scriptstyle{d-t}\\
\cline{1-1} \multicolumn{2}{c}{}
\end{array} \quad \mbox{where} \quad M_{x-z}:=
\begin{array}{|ccccccc|c}
\multicolumn{7}{c}{ \scriptstyle{d+e-t}}\\
\cline{1-7}
x&-1&0& \dots& &\dots&0 &\\
& \ddots &\ddots  & \ddots &&&\vdots &\scriptstyle{t}\\
 & & x & -1&0&\dots & 0 & \\
\cline{1-7} \multicolumn{2}{c}{}
\end{array} .$$}
We have (\cite[Lem.~1]{DHKS2007}):
$$\Sres_{t}(\ov f,\ov g)=(-1)^{(e-t)(d-t)}\det(S_t).$$
Also, exactly as in  the proof of \cite[Lem.~2]{DHKS2007}, {\small
$$
\begin{array}{c|c|}
\multicolumn{1}{c}{}&
\multicolumn{1}{c}{\scriptstyle{d+e-t}}\\
\cline{2-2}
\scriptstyle{t}& \ \ M_{x-z}\ \ \\
\cline{2-2}
\scriptstyle{e-t}& M_{\ov f}\\
\cline{2-2} \scriptstyle{d-t}&
M_{\ov g}\\
\cline{2-2} \multicolumn{2}{c}{}
\end{array}
\,
\begin{array}{|c|c|c}
\multicolumn{1}{c}{ \scriptstyle{d}}&\multicolumn{1}{c}{ \scriptstyle{e-t}}&\\
\cline{1-2} & \mathbf{0} &\scriptstyle{d}\\\cline{2-2}
V_{d+e-t}(\ov A )&\\
&\Id_{e-t} &\scriptstyle{e-t}\\
\cline{1-2} \multicolumn{3}{c}{}
\end{array}\   =    \  \begin{array}{|c|c|c}
\multicolumn{1}{c}{ \scriptstyle{d}}&\multicolumn{1}{c}{ \scriptstyle{e-t}}&\\
\cline{1-2}
  W_{x-z,t}(\ov A)& * &\scriptstyle{t}\\
\cline{1-2} \mathbf{0}&M'_{\ov f}&\scriptstyle{e-t}\\
\cline{1-2}\  W_{\ov g,d-t}(\ov A)
& * &\scriptstyle{d-t}\\
\cline{1-2} \multicolumn{3}{c}{}
\end{array}.
$$}
This implies first the generalization of \cite[Lem.~2]{DHKS2007} to
the multiple roots case: {\small $$\det\big({V}(\ov
A)\big)\,\Sres_t(\ov f,\ov g) = \ \det \
\begin{array}{|c|c}
\multicolumn{1}{c}{ \scriptstyle{d}}&\\\cline{1-1}
  W_{x-z,t}(\ov A)& \scriptstyle{t}\\
\cline{1-1}\  W_{\ov g,d-t}(\ov A)
 &\scriptstyle{d-t}\\
\cline{1-1} \multicolumn{2}{c}{}
\end{array} = (-1)^{d-t} \det \
\begin{array}{|c|c|c}
\multicolumn{1}{c}{ \scriptstyle{d}}&\multicolumn{1}{c}{ \scriptstyle{1}}\\
  \cline{1-2}&1&\scriptstyle{}\\
  V_{t+1}(\ov A)& \vdots &\scriptstyle{t+1}\\
  &x^t&\scriptstyle{}\\
\cline{1-2}\  W_{\ov g,d-t}(\ov A)
 &\mathbf{0} &\scriptstyle{d-t}\\
\cline{1-2} \multicolumn{2}{c}{}
\end{array},$$}
where the second equality is a consequence of obvious   row and
column operations.
% coming from  the row and column equivalences:
%$$\det \,
%\begin{array}{|c|c}
%\multicolumn{1}{c}{ \scriptstyle{d}}&\\\cline{1-1}
%  W_{x-z,t}(\ov A)& \scriptstyle{t}\\
%\cline{1-1}\  W_{\ov g,d-t}(\ov A)
% &\scriptstyle{d-t}\\
%\cline{1-1} \multicolumn{2}{c}{}
%\end{array} = (-1)^{d}\det \,\begin{array}{|c|c|c}
%\multicolumn{1}{c}{ \scriptstyle{d}}&\multicolumn{1}{c}{
%\scriptstyle{1}}& \\ \cline{1-2} \mathbf{0}&1& \scriptstyle{1}\\
%\cline{1-2}
%  W_{z-x,t}(\ov A)& \mathbf{0}& \scriptstyle{t}\\
%\cline{1-2}\  W_{\ov g,d-t}(\ov A)& \mathbf{0}
% &\scriptstyle{d-t}\\
%\cline{1-2} \multicolumn{2}{c}{}
%\end{array}$$
%and
%$$\begin{array}{|c|c|c}
%\multicolumn{1}{c}{ \scriptstyle{d_i}}&\multicolumn{1}{c}{ \scriptstyle{1}}&\\
%\cline{1-2}
% \mathbf{}  &1&\\
%  V_{t+1}(\alpha_i,d_i)& \vdots&\scriptstyle{t+1}\\
%   \mathbf{}  &x^t&\\
%\cline{1-2} \multicolumn{3}{c}{}
%\end{array}\equiv_r
%\begin{array}{|c|c|c}
%\multicolumn{1}{c}{ \scriptstyle{d_i}}&\multicolumn{1}{c}{ \scriptstyle{1}}&\\
%\cline{1-2}
%1 \quad  0\ \cdots \ 0&1&\scriptstyle{1}\\
%\cline{1-2}
%  W_{z-x,t}(\alpha_i,d_i)&\mathbf{0}&\scriptstyle{t}\\
%\cline{1-2} \multicolumn{3}{c}{} \end{array}\equiv_c
%\begin{array}{|c|c|c}
%\multicolumn{1}{c}{ \scriptstyle{d_i}}&\multicolumn{1}{c}{ \scriptstyle{1}}&\\
%\cline{1-2}
%\mathbf{0}&1&\scriptstyle{1}\\
%\cline{1-2}
%  W_{z-x,t}(\alpha_i,d_i)&\mathbf{0}&\scriptstyle{t}\\
%\cline{1-2} \multicolumn{3}{c}{} \end{array}.$$
Next, we get  as in the proof of \cite[Lem.~3]{DHKS2007}, {\small
\begin{align*}&\det\big({V}(\ov A)\big)
\det\big(V(\ov B))\, \Sres_t(\ov f,\ov g)
%=
%(-1)^{d-t + e +(d-t)e}\,\det%
%    \  \begin{array}{|c|c|c|c}
%\multicolumn{1}{c}{ \scriptstyle{d}}&\multicolumn{1}{c}{
%\scriptstyle{e}}&\multicolumn{1}{c}{
%\scriptstyle{1}}&\\
%\cline{1-3} &   &1 & \\[-3mm]
%  V_{t+1}(\ov A)& \mathbf{0} &\vdots & \scriptstyle{t+1}\\[-3mm]
%   &   &x^t & \\
%\cline{1-3} V_e(\ov A)&V_e(\ov B)&\mathbf{0} &\scriptstyle{e}\\
%\cline{1-3}\  W_{\ov g,d-t}(\ov A)
%& \mathbf{0} &\mathbf{0} &\scriptstyle{d-e}\\
%\cline{1-3} \multicolumn{3}{c}{}
%\end{array} \\ &= \ (-1)^{c}\, \det \left(
%\begin{tabular}
%[c]{cccc}
%& $\scriptstyle t+1$ & $\scriptstyle e$ & $\scriptstyle d-t$\\\cline{2-4}%
%$\scriptstyle t+1$ & \multicolumn{1}{|c}{$\Id_{t+1}$} &
%\multicolumn{1}{|c}{$\mathbf{0}$} &
%\multicolumn{1}{|c|}{$\mathbf{0}$}\\\cline{2-4}%
%$\scriptstyle e$ & \multicolumn{1}{|c}{$\mathbf{0}$} &
%\multicolumn{1}{|c}{$\Id_e$} &
%\multicolumn{1}{|c|}{$\mathbf{0}$}\\\cline{2-4}%
%$\scriptstyle d-t$ & \multicolumn{1}{|c}{$\mathbf{0}$} & \multicolumn{2}{|c|}{$M_{\ov g}$}\\\cline{2-4}%
%\end{tabular}
%\,\ \  \begin{array}{|c|c|c|c} \multicolumn{1}{c}{%
%\scriptstyle{d}}&\multicolumn{1}{c}{
%\scriptstyle{e}}&\multicolumn{1}{c}{
%\scriptstyle{1}}&\\
%\cline{1-3} &   &1 & \\[-3mm]
%  V_{t+1}(\ov A)& \mathbf{0} &\vdots & \scriptstyle{t+1}\\[-3mm]
%   &   &x^t & \\
%\cline{1-3} V_{d+e-t}(\ov A)&V_{d+e-t}(\ov B)&\mathbf{0} &\scriptstyle{d+e-t}\\
%\cline{1-3} \multicolumn{3}{c}{}
%\end{array}
%\right)
%\\ &
= \ (-1)^{c}\, \det
 \begin{array}{|c|c|c|c} \multicolumn{1}{c}{
\scriptstyle{d}}&\multicolumn{1}{c}{
\scriptstyle{e}}&\multicolumn{1}{c}{
\scriptstyle{1}}&\\
\cline{1-3} &   &1 & \\[-3mm]
  V_{t+1}(\ov A)& \mathbf{0} &\vdots & \scriptstyle{t+1}\\[-3mm]
   &   &x^t & \\
\cline{1-3} V_{d+e-t}(\ov A)&V_{d+e-t}(\ov B)&\mathbf{0} &\scriptstyle{d+e-t}\\
\cline{1-3} \multicolumn{3}{c}{}
\end{array}
 .
\end{align*}}
%since the first matrix of the second row is lower triangular with
%$1$ in the diagonal.
\end{proof}

 We note that starting from the first equality above
 %$$\det\big({V}(\ov
%A)\big)\,\Sres_t(\ov f,\ov g) = \ \det \
%\begin{array}{|c|c} \multicolumn{1}{c}{
%\scriptstyle{d}}&\\\cline{1-1}
%  W_{x-z,t}(\ov A)& \scriptstyle{t}\\
%\cline{1-1}\  W_{\ov g,d-t}(\ov A)
% &\scriptstyle{d-t}\\
%\cline{1-1} \multicolumn{2}{c}{}
%\end{array}$$
and applying similar arguments, we also get very simply {\small
\begin{equation}\label{sresul} \det\big({V}(\ov A)\big)
\det\big(V(\ov B))\, \Sres_t(\ov f,\ov g) =
(-1)^{(d-t)e}\,\det%
    \  \begin{array}{|c|c|c}
\multicolumn{1}{c}{ \scriptstyle{d}}&\multicolumn{1}{c}{
\scriptstyle{e}}&\\
\cline{1-2}
 W_{x-z}(\ov A)& \mathbf{0} & \scriptstyle{t}\\
\cline{1-2} V_{d+e-t}(\ov A)&V_{d+e-t}(\ov B)&\scriptstyle{d+e-t}\\
\cline{1-2} \multicolumn{3}{c}{}
\end{array}.
\end{equation}}

%\subsection{Extremal subresultants}

As mentioned above, when $t=0$ the formula in roots for
$\Sres_0(f,g)$ specializes well when considering $\Sres_0(\ov f,\ov
g)$. When $t=d<e$, the formula $\Sres_d(f,g)= \prod_{1\le i\le
d}(x-\alpha_i)$ also specializes well as $\Sres_d(\ov f,\ov g)=
\prod_{1\le i\le m}(x-\alpha_i)^{d_i}$.
 Our purpose now is to understand
formulas in roots for the following extremal subresultants, i.e  for
$\mbox{Sres}_1$ and $\mbox{Sres}_{d-1}$, in case  of multiple roots.

\smallskip
\noindent
 $\bullet$ The case $t=d-1<e$:
When $f$ has simple roots,  it is known (or can easily be derived
for instance from Sylvester's
 Identity~(\ref{sylvester}) for
 $p=d-1$ and $q=0$)
 that
$$\operatorname*{Sres}\nolimits_{d-1}(f,\ov g)= \sum_{ i=1}^{ d} \ov g(\alpha_i)\,\Big(\prod_{j\ne i}
\frac{x-\alpha_j}{\alpha_i-\alpha_j}\Big)=\sum_{ i=1}^{ d} \ov
g(\alpha_i)\,p_i,$$ where $p_i$ is the basic Lagrange interpolation
polynomial of degree strictly bounded by $d$ such that
$p_i(\alpha_i)=1$ and $p_i(\alpha_j)=0$ for $j\ne i$. In other
words, $\operatorname*{Sres}\nolimits_{d-1}(f,\ov g)$ is the
Lagrange interpolation polynomial of degree strictly bounded by $d$
which coincides with $\ov g$ in the $d$ values
$\alpha_1,\dots,\alpha_d$. This formula does not apply when $f$ has
multiple roots, but  we can show that we  get the natural
generalization of this fact, that is, that
$\operatorname*{Sres}\nolimits_{d-1}(\ov f,\ov g)$ is the Hermite
interpolation polynomial of degree strictly bounded by $d$ which
coincides with $\ov g$  and its derivatives up to the corresponding
orders  in the $m$ values $\alpha_1,\dots,\alpha_m$:

\begin{proposition}\label{t=d-1}$$\operatorname*{Sres}\nolimits_{d-1}(\ov f,\ov g)= \sum_{i=1}^m\sum_{j_i=0}^{d_i-1}
\frac{\overline{g}^{(j_i)}(\alpha_i)}{j_i!}\,p_{i,j_i},$$ where
$p_{i,j_i}$ is the basic Hermite interpolation polynomial defined by Condition~(\ref{condicion}) or
Proposition~\ref{piji} for $\ov A$.
\end{proposition}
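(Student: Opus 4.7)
The plan is to specialize Theorem~\ref{reph} to the case $t=d-1$ and then recognize the resulting expression as the Hermite interpolation formula \eqref{hhermite}. With $t=d-1$ we have $t+1=d$ and $d-t=1$, so the block $V_{t+1}(\ov A)$ in the first displayed formula of Theorem~\ref{reph} is exactly the square confluent Vandermonde $V(\ov A)$, while $W_{\ov g,\,d-t}(\ov A)=W_{\ov g,1}(\ov A)$ reduces to the single row
$$\Bigl(\,\ov g(\alpha_i),\ \ov g'(\alpha_i),\ \dots,\ \tfrac{\ov g^{(d_i-1)}(\alpha_i)}{(d_i-1)!}\,\Bigr)_{1\le i\le m}.$$
Thus Theorem~\ref{reph} yields
$$\det\bigl(V(\ov A)\bigr)\,\operatorname{Sres}_{d-1}(\ov f,\ov g)=-\det
\begin{array}{|c|c|}
\hline
 & 1\\
V(\ov A) & \vdots\\
 & x^{d-1}\\
\hline
W_{\ov g,1}(\ov A) & 0\\
\hline
\end{array}\ .$$

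Next I would compare this identity with the Hermite interpolation formula \eqref{hhermite} applied to the data $y_{i,j_i}:=\ov g^{(j_i)}(\alpha_i)/j_i!$. That formula asserts that the determinant on the right-hand side above, with the last row replaced by $(y_{1,0},\dots,y_{m,d_m-1},0)$, equals $-\det(V(\ov A))\,p(x)$ where $p$ is the unique polynomial of degree less than $d$ with $p^{(j_i)}(\alpha_i)=j_i!\,y_{i,j_i}=\ov g^{(j_i)}(\alpha_i)$ for all admissible $(i,j_i)$. Since the two bottom rows coincide, cancellation of the non-zero factor $\det(V(\ov A))$ gives $\operatorname{Sres}_{d-1}(\ov f,\ov g)=p(x)$; in other words, $\operatorname{Sres}_{d-1}(\ov f,\ov g)$ is the Hermite interpolant of $\ov g$ on $\ov A$.

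Finally, I would expand this Hermite interpolant in the basis of basic Hermite polynomials $p_{i,j_i}$ characterized by \eqref{condicion}. Writing $p=\sum_{i,j_i}c_{i,j_i}\,p_{i,j_i}$ and differentiating $q_\ell$ times at $\alpha_\ell$, the conditions in \eqref{condicion} force $c_{i,j_i}\cdot j_i!=p^{(j_i)}(\alpha_i)=\ov g^{(j_i)}(\alpha_i)$, hence $c_{i,j_i}=\ov g^{(j_i)}(\alpha_i)/j_i!$, which is exactly the claimed formula.

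I expect the only delicate point to be bookkeeping of signs and of the factor $\det(V(\ov A))$: one must match the $(-1)^{d-t}=-1$ appearing in Theorem~\ref{reph} with the explicit minus sign in \eqref{hhermite}, and confirm that the last row in the Theorem~\ref{reph} determinant is literally the row $(y_{1,0},\dots,y_{m,d_m-1},0)$ with $y_{i,j_i}=\ov g^{(j_i)}(\alpha_i)/j_i!$. Both are routine once the sizes $t+1=d$ and $d-t=1$ are unpacked; no further computation is needed.
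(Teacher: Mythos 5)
Your proposal is correct and follows the paper's own proof essentially verbatim: specialize the first identity of Theorem~\ref{reph} to $t=d-1$, identify the entries of $W_{\ov g,1}(\ov A)$ as $\ov g^{(j_i)}(\alpha_i)/j_i!$, and conclude via the Hermite interpolation determinant formula \eqref{hhermite} together with the expansion $p=\sum_{i,j_i}y_{i,j_i}\,p_{i,j_i}$ already recorded in the text. The sign bookkeeping you flag does work out exactly as you expect ($(-1)^{d-t}=-1$ cancels against the minus sign in \eqref{hhermite}).
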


\begin{proof}

In this  case, applying the first statement of Theorem~\ref{reph}
we get
$$\Sres_{d-1}(\ov f,\ov g)  =
-\,\det\big(V(\ov A)\big)^{-1}\,
 \det \
\begin{array}{|c|c|c}
\multicolumn{1}{c}{ \scriptstyle{d}}&\multicolumn{1}{c}{ \scriptstyle{1}}\\
  \cline{1-2}&1&\scriptstyle{}\\
  V_{d}(\ov A)& \vdots &\scriptstyle{d}\\
  &x^{d-1}&\scriptstyle{}\\
\cline{1-2}\  W_{\ov g,1}(\ov A)
 &\mathbf{0} &\scriptstyle{1}\\
\cline{1-2} \multicolumn{2}{c}{}
\end{array}$$
where when following the subindex notation of
Formula~(\ref{hhermite}), we note that
$$\big(W_{\ov g,1}(\ov A)\big)_{i,j_i}=\frac{\ov
g^{(j_i)}(\alpha_i)}{j_i!}.
$$
The conclusion follows by Formula~(\ref{hhermite}).
\end{proof} For example, when $\ov A=(\alpha,d)$, we get
$$\operatorname*{Sres}\nolimits_{d-1}((x-\alpha)^d,\ov g)=
\sum_{j=0}^{d-1}
\frac{\overline{g}^{(j)}(\alpha)}{j!}\,(x-\alpha)^j,$$ the Taylor
expansion of $\ov g$ up to order $d-1$.

\medskip \noindent
 $\bullet$ The case $t=1<d\le e$:
We keep Notation~\ref{fiki}. When $f$ has simple roots,  it is known
(or can easily be derived for instance from Sylvester's
 Identity~(\ref{sylvester}) for
 $p=1$ and $q=0$)
 that
\begin{align}\label{van} \operatorname*{Sres}\nolimits_{1}(f,\ov g)
 &=(-1)^{d-1}\sum_{i=1}^d\Big( \prod_{j\ne i} \frac{\ov g(\alpha_j)}{\alpha_i-\alpha_j}\Big)
 (x-\alpha_i)\\ \nonumber &=(-1)^{d-1}\sum_{i=1}^d \frac{\prod_{j\ne i} \ov g(\alpha_j)}{f_i(\alpha_i)}\,(x-\alpha_i).
\end{align}
The general situation is a bit less obvious, but in any case we can
get an expression of $\mbox{Sres}_1(\overline{f},\overline{g})$ by
using the coefficients of the Hermite interpolation
 polynomial,  in this case of the whole data  $$\ov A \cup \ov
B:=\big(
(\alpha_1,d_1);\dots;(\alpha_m,d_m);(\beta_1,e_1);\dots;(\beta_n,e_n)\big).$$
We note that  $$\det\big(V(\ov A\cup \ov B)\big)=\det\big(V(\ov
A)\big)\det\big(V(\ov B)\big)R(\ov B,\ov A)$$ which holds even when
$\alpha_i=\beta_j$ for some $i,j$.

 \begin{theorem}\label{d=1}{\small
\begin{align*}\Sres_1(\overline{f},\overline{g})&=\sum_{i=1}^{
m}(-1)^{d-d_i} \Big(\frac{\prod_{j\neq
i} \ov g(\alpha_j)^{d_j}}{\ov
f_i(\alpha_i)}\Big)\ov g(\alpha_i)^{d_i-1}
  \Big((x-\alpha_i)\cdot \\ &\sum_{\tiny\begin{array}{c}k_1+\cdots
  +\widehat k_i+\cdots \\ \cdots +k_{m+n}=d_i-1\end{array}}\prod_{\tiny \begin{array}{c}1\le j\le m\\j\neq
i\end{array}}
  \frac{{d_j-1+k_j\choose
  k_j}}{(\alpha_i-\alpha_j)^{k_j}}\prod_{1\le \ell\le n}\frac{{e_\ell-1+k_{m+\ell}\choose
  k_{m+\ell}}}{
  (\alpha_i-\beta_\ell)^{k_{m+\ell}}}\\
  &+ \min\{1,d_i-1\} \sum_{\tiny\begin{array}{c}k_1+\cdots
  +\widehat k_i+\cdots \\ \cdots +k_{m+n}=d_i-2\end{array}}\prod_{\tiny \begin{array}{c}1\le j\le m\\j\neq
i\end{array}}
  \frac{{d_j-1+k_j\choose
  k_j}}{(\alpha_i-\alpha_j)^{k_j}}\prod_{1\le \ell\le n}\frac{{e_\ell-1+k_{m+\ell}\choose
  k_{m+\ell}}}{
  (\alpha_i-\beta_\ell)^{k_{m+\ell}}}\Big).
  \end{align*}}
  \end{theorem}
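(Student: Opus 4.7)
My approach is to derive the formula by a deformation argument that reduces the multiple-roots case to the simple-roots identity~\eqref{van}. Fix generic $\gamma_{i,k}$, set $\xi_k^{(i)}:=\alpha_i+\epsilon\gamma_{i,k}$ for $1\le i\le m,\,1\le k\le d_i$, and let $f_\epsilon(x):=\prod_{i,k}(x-\xi_k^{(i)})$, a monic degree-$d$ polynomial with $d$ simple roots which specializes to $\ov f$ at $\epsilon=0$. Since $\Sres_1$ is polynomial in the coefficients of its arguments, $\Sres_1(f_\epsilon,\ov g)\to\Sres_1(\ov f,\ov g)$ as $\epsilon\to 0$. Rewriting~\eqref{van} as
\[
\Sres_1(f_\epsilon,\ov g)=(-1)^{d-1}\,\Res(f_\epsilon,\ov g)\sum_{i,k}\frac{x-\xi_k^{(i)}}{\ov g(\xi_k^{(i)})\,f_\epsilon'(\xi_k^{(i)})}
\]
(using $\Res(f,\ov g)=\prod_i\ov g(\alpha_i)$ and $f'(\alpha_i)=f_i(\alpha_i)$), I would then group terms by the cluster index $i$ and factor $f_\epsilon=f_\epsilon^{(i)}\,\tilde f_i$ with $f_\epsilon^{(i)}(y):=\prod_k(y-\xi_k^{(i)})$ and $\tilde f_i(y):=\prod_{j\ne i}\prod_l(y-\xi_l^{(j)})$; since $f_\epsilon^{(i)}(\xi_k^{(i)})=0$, one has $f_\epsilon'(\xi_k^{(i)})=(f_\epsilon^{(i)})'(\xi_k^{(i)})\,\tilde f_i(\xi_k^{(i)})$.

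Setting $\phi_i(y;\epsilon):=(x-y)/[\ov g(y)\,\tilde f_i(y)]$, the cluster-$i$ inner sum becomes the $(d_i-1)$-st divided difference
\[
\phi_i[\xi_1^{(i)},\ldots,\xi_{d_i}^{(i)};\epsilon]=\sum_{k=1}^{d_i}\frac{\phi_i(\xi_k^{(i)};\epsilon)}{(f_\epsilon^{(i)})'(\xi_k^{(i)})},
\]
which by the classical confluence property of divided differences tends to $\phi_i^{(d_i-1)}(\alpha_i;0)/(d_i-1)!$ as $\epsilon\to 0$ (noting that $\phi_i(\cdot;0)=(x-y)/[\ov g(y)\,\ov f_i(y)]$ is smooth near $\alpha_i$ under the standing assumption $\alpha_i\notin\{\alpha_j\}_{j\ne i}\cup\{\beta_\ell\}$). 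Combining with $\Res(f_\epsilon,\ov g)\to\prod_{j=1}^m\ov g(\alpha_j)^{d_j}$ and $\tilde f_i\to\ov f_i$ yields the intermediate identity
\[
\Sres_1(\ov f,\ov g)=(-1)^{d-1}\prod_{j=1}^m\ov g(\alpha_j)^{d_j}\sum_{i=1}^m\frac{1}{(d_i-1)!}\left.\frac{d^{d_i-1}}{dy^{d_i-1}}\right|_{y=\alpha_i}\!\frac{x-y}{\ov g(y)\,\ov f_i(y)}.
\]

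To recover the explicit form in the statement I would expand this derivative. Since $(x-y)^{(s)}=0$ for $s\ge 2$, Leibniz collapses it to
\[
(x-\alpha_i)\,\frac{[1/(\ov g\ov f_i)]^{(d_i-1)}(\alpha_i)}{(d_i-1)!}-\min\{1,d_i-1\}\,\frac{[1/(\ov g\ov f_i)]^{(d_i-2)}(\alpha_i)}{(d_i-2)!}.
\]
Writing $1/(\ov g\ov f_i)=\prod_{j\ne i}(y-\alpha_j)^{-d_j}\prod_\ell(y-\beta_\ell)^{-e_\ell}$ and using the multi-factor Leibniz formula together with $\tfrac{1}{k!}[(y-\gamma)^{-c}]^{(k)}(\alpha_i)=(-1)^k\binom{c+k-1}{k}(\alpha_i-\gamma)^{-c-k}$ gives
\[
\frac{[1/(\ov g\ov f_i)]^{(r)}(\alpha_i)}{r!}=\frac{(-1)^r}{\ov g(\alpha_i)\,\ov f_i(\alpha_i)}\,S_i^r,
\]
where $S_i^r$ is precisely the multiple sum over $k_1+\cdots+\widehat{k_i}+\cdots+k_{m+n}=r$ appearing in the theorem. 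Substituting, absorbing $(-1)^{d-1}(-1)^{d_i-1}=(-1)^{d-d_i}$, and cancelling one power of $\ov g(\alpha_i)$ from $\prod_{j}\ov g(\alpha_j)^{d_j}$ against the $\ov g(\alpha_i)$ in the denominator recovers the claimed formula. The main subtlety is the confluent-limit step together with careful bookkeeping of the signs and binomial factors; everything else is a direct substitution.
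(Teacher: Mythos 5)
Your argument is correct, and it reaches the paper's own intermediate identity
\[
\Sres_1(\ov f,\ov g)=(-1)^{d-1}\Big(\prod_{j=1}^m\ov g(\alpha_j)^{d_j}\Big)\sum_{i=1}^m\frac{1}{(d_i-1)!}\Big[\frac{x-y}{\ov g(y)\,\ov f_i(y)}\Big]^{(d_i-1)}\Big|_{y=\alpha_i},
\]
after which your Leibniz expansion and sign bookkeeping coincide with the paper's. But you get there by a genuinely different route. The paper works entirely algebraically: it specializes the determinantal identity of Theorem~\ref{reph} (in the form~(\ref{sresul})) to $t=1$, expands the resulting determinant along the row $W_{x-z,1}(\ov A)$, recognizes the cofactors as entries of the last column of $V(\ov A\cup\ov B)^{-1}$, and then invokes Cs\'aki's closed formula for the inverse of a confluent Vandermonde matrix to convert those entries into the derivatives $\big(1/\ov h_i\big)^{(d_i-1)}(\alpha_i)$ and $\big(1/\ov h_i\big)^{(d_i-2)}(\alpha_i)$. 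You instead perturb the packed roots to $d$ simple roots, apply the known simple-root identity~(\ref{van}) rewritten through $f'_\epsilon$, and let the clusters coalesce, identifying each cluster sum as a $(d_i-1)$-st divided difference whose confluent limit is the corresponding Taylor coefficient. Your approach is more elementary (no confluent-Vandermonde inversion, no citation of \cite{Cs75}) and makes transparent the ``limit of the simple-roots formula'' interpretation that motivates the whole section; its costs are the analytic ingredients: you should note that joint continuity of the divided difference in the nodes \emph{and} in the function $\phi_i(\cdot;\epsilon)$ (e.g.\ via the Hermite--Genocchi representation) is what justifies the limit when $\tilde f_i$ itself depends on $\epsilon$, and that since the argument is carried out for complex specializations with pairwise distinct $\alpha_i,\beta_\ell$, one concludes the identity of rational functions in the indeterminates by Zariski density. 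The paper's derivation avoids limits altogether and stays valid verbatim over any field of characteristic zero, at the price of relying on the determinantal machinery of Theorem~\ref{reph} and the external inversion formula.
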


\begin{proof}
 Setting $t=1$ in Expression~(\ref{sresul})  we get
$$ \det\big({V}(\ov A)\big)
\det\big(V(\ov B)\big)\, \Sres_1(\ov f,\ov g)=\ (-1)^{(d-1)e}\, \det
 \begin{array}{|c|c|c}
\multicolumn{1}{c}{ \scriptstyle{d}}&\multicolumn{1}{c}{ \scriptstyle{e}}&\\
\cline{1-2}
  W_{x-z,1}(\ov A)& \mathbf{0} &\scriptstyle{1}\\
\cline{1-2} V_{d+e-1}(\ov A)&V_{d+e-1}(\ov B)&\scriptstyle{d+e-1}\\
\cline{1-2} \multicolumn{3}{c}{}
\end{array}$$
where $$W_{x-z,1}=(\underbrace{x-\alpha_1, -1, 0,\dots,
0}_{\scriptstyle_{d_1}},\dots, \underbrace{x-\alpha_m,-1,0
\dots,0}_{\scriptstyle d_m}).$$ We  expand the determinant w.r.t.
the first row, and observe that when we delete the first row and
column $j$, the matrix that survives coincides with $V(\ov A \cup
\ov B)_{(d+e,j)}$, the submatrix of $V(\ov A\cup \ov B)$ obtained by
deleting the last row and column $j$. Therefore,
\begin{align*} & \ \det
\begin{array}{|c|c|}
 \cline{1-2}
  W_{x-z,1}(\ov A)& \mathbf{0} \\
\cline{1-2} V_{d+e-1}(\ov A)&V_{d+e-1}(\ov B)\\
\cline{1-2}
\end{array}\\ & \quad =\sum_{j=1}^m (-1)^{\phi(j)-1}\Big(\det\big(V(\ov A\cup
\ov B)|_{(d+e,\phi(j))}\big)(x-\alpha_j)+ \det\big(V(\ov A\cup \ov
B)|_{(d+e,\phi'(j))}\big)\Big),
\end{align*}
where $\phi(i)$ equals the number of the column  corresponding to
$(1,\alpha_i,\dots,\alpha_i^{d+e-1})$ in
 $V(\overline{A}\cup\overline{B})$, and
 $\phi'(i)=\phi(i)+1$ if $d_i>1$ and $0$ otherwise.

\smallskip
\noindent Now, from
\begin{align*}&\det\big(V(\ov A\cup \ov B)|_{(d+e,\phi(j))}\big)
=(-1)^{d+e-\phi(j)}\,\det\big(V(\ov A\cup \ov B)\big) \, V(\ov A
\cup \ov B)^{-1}_{\phi(j),d+e},\\& \det\big(V(\ov A\cup \ov
B)|_{(d+e,\phi'(j))}\big) =(-1)^{d+e-\phi'(j)}\,\det\big(V(\ov A\cup
\ov B)\big) \, V(\ov A \cup \ov B)^{-1}_{\phi'(j),d+e} \end{align*}
(by the cofactor expression for the inverse) and $$\det\big(V(\ov
A\cup \ov B)\big)=(-1)^{de}\det \big(V(\ov A)\big)\det \big(V(\ov
B)\big)R(\ov A,\ov B)$$ we first get, since $ R(\ov A,\ov
B)=\prod_{1\le i\le m} \ov g(\alpha_i)^{d_i}$, that
{\small\begin{equation*}\label{interm}\Sres_1(\overline{f},\overline{g})=
 (-1)^{d-1}\big(\prod_{1\le i\le m}
\ov g(\alpha_i)^{d_i}\big)\left(\sum_{i=1}^m V(\ov A\cup \ov B)^{-1}
_{\phi(i),d+e}(x-\alpha_i)-
 \sum_{i=1}^m V(\ov A \cup \ov
 B)^{-1}_{\phi'(i),d+e}\right).\end{equation*}}
\noindent We set $\ov h:=\overline{f}\,\overline{g}$, and for
$i=1,\ldots,m$, $\ov h_i:=\ov h/(x-\alpha_i)^{d_i}$. In
\cite[Id.~9]{Cs75}, it is shown that
 $$V(\ov A\cup\ov B)^{-1}
_{\phi(i),d+e}=\frac{1}{(d_i-1)!}\left(\frac{1}{\ov
h_i}\right)^{(d_i-1)}(\alpha_i),
 $$
 and when $d_i>1$,
 $$V(\ov A\cup\ov B)^{-1}_{\phi'(i),d+e}
=\frac{1}{(d_i-2)!}\left(\frac{1}{\ov
h_i}\right)^{(d_i-2)}(\alpha_i).
 $$
 Therefore, we obtain the statement by applying
Leibnitz rule  {\small
$$\left(\frac{1}{\ov h_i}\right)^{(k)}=(-1)^k \,k!\,\sum_{k_1+\dots+\widehat{k_i}+
\cdots +k_{m+n}=k } \prod_{\tiny \begin{array}{c}1\le j\le m\\j\neq
i\end{array}}\frac{{d_j-1+k_j\choose k_j}}
   {(x-\alpha_j)^{d_j+k_j}}\prod_{1\le \ell\le n}\frac{{e_\ell-1+k_{m+\ell}\choose k_{m+\ell}}}
   {(x-\beta_\ell)^{e_\ell+k_{m+\ell}}}.
$$}
\end{proof}
Note that in the case that $f$ has simple roots we immediately
recover Identity~(\ref{van}) while when $\ov f=(x-\alpha)^d$ for
$d\ge 2$, we recover Proposition~3.2 of \cite{DKS09}:
\begin{eqnarray*}\Sres_{1}((x-\alpha)^d,\ov g)&=&
\ov g(\alpha)^{d-1} \Big({ \sum_{k_1+\dots+k_n=d-1} \Big(\prod_{\ell=1}^n
\frac{{e_\ell-1+k_\ell\choose
k_\ell}}{(\alpha-\beta_\ell)^{k_\ell}}\Big)(x-\alpha)}+\\
& &{\sum_{k_1+\cdots +k_n=d-2}\ \prod_{\ell=1}^n
 \frac{{e_\ell-1+k_\ell\choose k_\ell}}{(\alpha-\beta_\ell)^{k_\ell}}  }\ \Big)
.\end{eqnarray*}

\bigskip
\section{Multivariate Case: Poisson-like formulas for Subresultants}\label{3}
We turn to the multivariate case, considering the definition of
subresultants introduced in  \cite{Cha}.  Our goal is to generalize
Theorem $3.2$ in \cite{DKS2006} --that we recall below-- to the case when the
considered polynomials have multiple roots. We first  fix the notation, referring  the reader to \cite{DKS2006} for more
details.
\subsection{Notation}
Fix  $n\in \N$ and set $D_i\in \N$ for  $1\le i\le n+1$. Let
$$f_i:=\sum_{|\bfalpha|\le D_i}a_{i,\bfalpha}{\bfx}^\bfalpha \ \in \ K[x_1,\dots,x_n],$$
be  polynomials of degree $D_i$ in $n$ variables,  where
$\bfalpha=(\alpha_1,\dots,\alpha_n)\in \left(\Z_{\geq0}\right)^n$,
${\bfx}^\bfalpha:=x_1^{\alpha_1}\cdots x_n^{\alpha_n}$,
$|\bfalpha|=\alpha_1+\cdots + \alpha_n$, and $K$ is a field of
characteristic zero, that we assume without loss of generality to be
algebraically closed.

\smallskip Fix $t\in \N$. Let $k:=\HH_{D_1\dots D_{n+1}}(t)$ be the
Hilbert function at $t$ of a regular sequence of $n+1$ homogeneous
polynomials in $n+1$ variables of degrees $D_1,\dots,D_{n+1}$, i.e.
$$k=\#\{\bfx^\bfalpha: |\bfalpha|\le t, \, \alpha_i<D_i,\,  1\le i\le n,
\mbox{ and } t-|\bfalpha|<D_{n+1}\}.$$ We set
 $$
 \SS:=\{\bfx^{\bfgamma_1},\ldots,\bfx^{\bfgamma_k}\}\subset K[\bfx]_t
 $$
 a set of $k$ monomials of degree bounded by $t$, and
 $$\Delta_{\SS}(f_1, \ldots, f_{n+1}):=\Delta^{(t)}_{\SS^h}(\fh_1,\dots,\fh_{n+1}),$$
for the {\em order $t$  subresultant of $\fh_1,\dots,\fh_{n+1}$
with respect to the family}
$\SS^h:=\{\bfx^{\bfgamma_1}x_{n+1}^{t-|\bfgamma_1|},\ldots,\bfx^{\bfgamma_k}x_{n+1}^{t-|\bfgamma_k|}\}
$ defined in \cite{Cha}.   Here, $\fh_i$ denotes the homogenization of $f_i$ by the
 variable $x_{n+1}$.

\smallskip

 We recall that the {subresultant} $\Delta_\SS$ is a polynomial in
 the coefficients of the $f^h_i$ of degree
$\HH_{D_1\dots D_{i-1} D_{i+1} \dots D_{n+1}}(t-D_i)$ for $1\le i\le
n+1$,
 having the following  property:  $\Delta_\SS=0$  if and only if $I_t\cup  \SS^h$ does not
generate the space of all forms of degree $t$ in $k[x_1, \ldots,
x_{n+1}]$, where $I_t$ denotes the degree $t$ part of the ideal generated
by the  $\fh_i$'s.

\smallskip By \cite{Cha2} we know that
\begin{eqnarray}\label{Extra}
\det(M_\SS)= \EE(t)\,\Delta_{\SS},
 \end{eqnarray}
where  $ M_\SS$
%\in K^{(N-k)\times (N-k)}$
denotes the Macaulay-Chardin matrix obtained
from
\begin{equation}\label{sylv}
\left[\begin{array}{c}\;\;M_{f_1}\;\;\\[-2mm]\vdots \\[-2mm]\;\;M_{f_{n+1}}\;\;\end{array}\right]
\end{equation}
by deleting the  columns indexed by the monomials in $\SS$, and
$\EE(t)$ is the extraneous factor defined as the determinant of a specific
square submatrix of (\ref{sylv})
%whose rows and columns correspond
%to monomials  $\bfx^\bfalpha$ having at least two different indexes
%$1\leq i,j\leq n$ satisfying $\alpha_i\geq d_i,\ \alpha_j\geq d_j$
(see \cite{Cha,Cha2,DKS2006}).

\smallskip \noindent
 We set $\rho:=(D_1-1)+\cdots +
(D_n-1)$ and for $j\ge 0$,    $\tau_j:= \HH_{D_1\ldots D_n}(j)$, the
Hilbert function at $j$ of a regular sequence of $n$ homogeneous
polynomials in $n$ variables of degrees $D_1,\ldots , D_n$. We define also
\begin{equation}\label{prof}
\TT_{j}:=\left\{\small\begin{array}{l} \mbox{\textit{any} set of }
\tau_j \mbox{ monomials of degree } j
 \ \mbox{ if  }  j\ge \max\{0,t-D_{n+1}+1\}
\\[1mm]
\{\bfx^\bfalpha: \,|\bfalpha|=j, \alpha_i<D_i \mbox{ for } 1\le i\le
n\}\ \mbox{ if } 0\le j<t-D_{n+1}+1,\end{array}\right.
\end{equation}
and denote with $D:=D_1\cdots D_n$  the {\em B\'ezout number}, the number of
common solutions in ${K}^n$ of $n$ generic polynomials.\\
Set $ \TT:=\cup_{j\geq0}\TT_j$ and
$\TT^*:=\cup_{j=t+1}^{\rho}\TT_j$. Note
  that $|\TT|={D}.$ We enumerate the elements of $\TT$ as follows:
$\TT=\{\bfx^{\bfalpha_1},\ldots,\bfx^{\bfalpha_{D}}\},$ and assume
that for $s:= |\TT^*|$ we have  $
\TT^*=\{\bfx^{\bfalpha_1},\ldots,\bfx^{\bfalpha_{s}}\}$ . Also set
\begin{equation}\label{R} {\mathcal
R}:= \{\bfx^{\bfbeta_1},\dots,\bfx^{\bfbeta_r}\} =
\{\bfx^\bfalpha:\,|\bfalpha|\le t, \alpha_i<D_i, 1\le i\le n,
 \ t-|\bfalpha|\geq D_{n+1}\}.\end{equation} Finally, for $1\le i \le n$, let $\widetilde f_i$
be the homogeneous component
 of degree $D_i$ of $f_i$,
  and
 $\widetilde\Delta_{\TT_j}:=\Delta_{\TT_{j}}^{(j)}(\widetilde f_1,\dots,\widetilde f_n)$
 be the order $j$ subresultant of $\widetilde f_1,\dots,
 \widetilde f_n$  with respect to $\TT_{j}$.

\subsection{Poisson-like formula for subresultants}

From now on we assume that $f_1,\dots, f_{n}$ are generic in the
sense they have no roots at infinity (which implies by B\'ezout
theorem that the quotient algebra $A:=K[\bfx]/(f_1,\dots,f_n)$ is a
finitely dimensional $K$-vector space of dimension $D$, which equals
the number of common roots in $K^n$ of these polynomials, counted
with multiplicity, see e.g. \cite[Ch. 3, Th. 5.5]{CLO98}), and that
$\TT$ is a basis of $A$.

 In
\cite{DKS2006} we treated the case of general polynomials with
indeterminate coefficients, which specializes well under our
assumptions to the  case when the  common  roots ${\bfxi}_1,\dots,
{\bfxi}_{D}$  of $f_1,\dots, f_n$ in $ K^n$  are all {\em simple}.
Set  $Z:=\{{\bfxi}_1,\dots, {\bfxi}_{D} \}$. We introduced the
Vandermonde matrix
\begin{equation}\label{V_T}
V_\TT(Z):=\begin{array}{|ccc|}\cline{1-3}
{\bfxi}_1^{\bfalpha_1} &\cdots& {\bfxi}_{D}^{\bfalpha_1}\\%[-2mm]
\vdots & & \vdots \\
{\bfxi}_1^{\bfalpha_D} &\cdots& {\bfxi}_{D}^{\bfalpha_D}\\
\cline{1-3}
\end{array}\ \in \ {K}^{D\times D}
\end{equation}
whose determinant is non zero, since  $\TT$ is assumed to be a  basis of $A$, and we
defined {\small
\begin{equation}\label{OS} \OO_\SS(Z):=\begin{array}{|ccc|c}
\multicolumn{1}{c}{}&\multicolumn{1}{c}{\scriptstyle D}&
\multicolumn{1}{c}{}&
\\
\cline{1-3}
{\bfxi}_1^{\bfgamma_1} &\cdots& {\bfxi}_{D}^{\bfgamma_1}&\\%[-2mm]
\vdots & & \vdots& \scriptstyle{k}\\%[-2mm]
{\bfxi}_1^{\bfgamma_k} &\cdots& {\bfxi}_{D}^{\bfgamma_k}\\%[-2mm]
\cline{1-3}
{\bfxi}_1^{\bfalpha_1} &\cdots& {\bfxi}_{D}^{\bfalpha_1}&\\%[-2mm]
\vdots & & \vdots &\scriptstyle s\\%[-2mm]
{\bfxi}_1^{\bfalpha_s} &\cdots& {\bfxi}_{D}^{\bfalpha_s}\\%[-2mm]
\cline{1-3} {\bfxi}_1^{\bfbeta_1}f_{n+1}({\bfxi}_1)&\cdots&
{\bfxi}_{D}^{\bfbeta_1}f_{n+1}({\bfxi}_{D})& \\%[-2mm]
\vdots& &\vdots& \scriptstyle r\\%[-2mm]
{\bfxi}_1^{\bfbeta_r}f_{n+1}({\bfxi}_1)& \cdots&
{\bfxi}_{D}^{\bfbeta_r}f_{n+1}({\bfxi}_{D})&\\ \cline{1-3}
\multicolumn{2}{c}{}
\end{array}\ \in \ {K}^{D\times D}.
\end{equation}}

\begin{theorem}\label{subresultant2}\cite[Th.~3.2]{DKS2006}
For any  $t\in\Z_{\geq0}$ and for any
$\SS=\{\bfx^{\bfgamma_1},\ldots,\bfx^{\bfgamma_k}\}$ $  \subset
K[\bfx]_t$ of cardinality  $k=\HH_{D_1\dots D_{n+1}}(t)$, we have
\begin{equation*}\label{quotient}
\Delta_\SS(f_1,\dots,f_{n+1})=\pm \left(\prod_{j=t-D_{n+1}+1}^t
\widetilde \Delta_{\TT_{j}}\right) \frac{\det
\big(\OO_\SS(Z)\big)}{\det \big(V_{\TT}(Z)\big)}.
\end{equation*}
\end{theorem}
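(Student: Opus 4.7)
The plan is to compute $\det(M_\SS)$ in two ways and then invoke Chardin's identity $\det(M_\SS) = \EE(t)\Delta_\SS$ from (\ref{Extra}) to isolate $\Delta_\SS$. First, I would verify from the explicit construction of $\EE(t)$ in \cite{Cha2} that the extraneous factor factors as a product of subresultants of the leading homogeneous forms $\widetilde f_1,\dots,\widetilde f_n$ over the ``boundary'' degrees:
$$\EE(t) = \pm \prod_{j \in [0,\,t-D_{n+1}]\cup[t+1,\,\rho]} \widetilde{\Delta}_{\TT_j}.$$
Here the first range $j\in[0,t-D_{n+1}]$ arises from the $\RR$-layer of degree-$t$ monomials (those of the form $\bfx^\bfbeta x_{n+1}^{t-|\bfbeta|}$ with $|\bfbeta|\le t-D_{n+1}$, whose Macaulay columns involve only $f_1^h,\dots,f_n^h$), and the range $j\in[t+1,\rho]$ arises from the $\TT^*$-layer used in Chardin's construction of $\EE(t)$. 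Thus the $\widetilde{\Delta}_{\TT_j}$'s missing from $\EE(t)$ are exactly those for $j\in[t-D_{n+1}+1,\,t]$, which is the product appearing in the claim.

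Next, I would compute $\det(M_\SS)$ by right-multiplying $M_\SS$ by an auxiliary $D\times D$ matrix $N$ built so that $M_\SS\cdot N$ is block triangular and exposes the $\OO_\SS(Z)$ structure, with $\det(N) = \pm \det(V_\TT(Z))$. The key columns of $N$ are the ``evaluation at roots'' vectors $((\bfxi_i,1)^\bfalpha)_\bfalpha$ indexed by the degree-$t$ monomials outside $\SS^h$: pairing these with the $f_j^h$-rows of $M_\SS$ for $j\le n$ gives $0$ (since $f_j(\bfxi_i)=0$), while pairing with the $f_{n+1}^h$-rows yields $f_{n+1}(\bfxi_i)\cdot\bfxi_i^\bfbeta$, producing the bottom $r$ rows of $\OO_\SS(Z)$. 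The top $k+s$ rows of $\OO_\SS(Z)$ come from the $\SS$-block and $\TT^*$-block contributions of $N$, while the remaining columns are built using that $\TT$ is a $K$-basis of $A=K[\bfx]/(f_1,\dots,f_n)$: they contribute diagonal blocks for each middle-range degree $j\in[t-D_{n+1}+1,t]$, each equal up to sign to $\widetilde{\Delta}_{\TT_j}$.

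Finally, equating $\det(M_\SS)\,\det(N) = \det(M_\SS\cdot N)$ --- where the right side, by the block-triangular structure, equals $\pm\det(\OO_\SS(Z))\cdot\prod_{j=t-D_{n+1}+1}^{t}\widetilde{\Delta}_{\TT_j}$ --- then substituting $\det(M_\SS)=\EE(t)\Delta_\SS$ together with the factorization of $\EE(t)$ above, and cancelling the common $\widetilde{\Delta}_{\TT_j}$'s on both sides, yields the claimed formula. The main obstacle will be the explicit construction of the right-multiplier $N$ and the verification that $M_\SS\cdot N$ is genuinely block triangular with the claimed diagonal blocks: this requires a careful three-way partition of the non-$\SS^h$ columns of $M_\SS$ (into $\RR$-layer, middle layer, and $\TT^*$-layer), matching each layer with the appropriate row block of $M_\SS$ so that all off-diagonal interactions vanish. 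Tracking the signs from the required row and column permutations is also delicate, which is why the statement only claims equality up to $\pm$.
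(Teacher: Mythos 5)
Your overall strategy --- right-multiplying $M_\SS$ by a matrix of evaluations at the roots so that the product becomes block triangular, and then stripping off the extraneous factor via Chardin's identity \eqref{Extra} --- is the right one, and it is essentially how the cited proof in \cite{DKS2006} proceeds. But your bookkeeping of where the factors $\widetilde\Delta_{\TT_j}$ and $\EE(t)$ live does not close up. Granting both of your claims, substitution gives
$\bigl(\prod_{j\in[0,t-D_{n+1}]\cup[t+1,\rho]}\widetilde\Delta_{\TT_j}\bigr)\,\Delta_\SS\,\det V_\TT(Z)=\pm\det\OO_\SS(Z)\,\prod_{j=t-D_{n+1}+1}^{t}\widetilde\Delta_{\TT_j}$;
the two index ranges are \emph{disjoint}, so there are no common factors to cancel, and what you actually derive is the stated formula divided by the spurious factor $\prod_{j\in[0,t-D_{n+1}]\cup[t+1,\rho]}\widetilde\Delta_{\TT_j}$. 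The correct intermediate identities (these are what \cite{DKS2006} proves, and what the proof of Theorem~\ref{multtheorem} quotes) are
$\pm\EE(t)\,\Delta_\SS\,\det V_\TT(Z)=\pm\det(M')\,\det\OO_\SS(Z)$, where $M'$ is the square matrix \eqref{M'} built from $f_1,\dots,f_n$ alone with the $\TT$-columns deleted, together with $\det(M')=\pm\EE(t)\prod_{j=t-D_{n+1}+1}^{t}\widetilde\Delta_{\TT_j}$. Combining these, $\det(M_\SS)\det V_\TT(Z)=\pm\EE(t)\,\det\OO_\SS(Z)\prod_{j=t-D_{n+1}+1}^{t}\widetilde\Delta_{\TT_j}$, so your block-triangular computation is off by exactly a factor of $\EE(t)$: the diagonal blocks of $M_\SS\cdot N$ cannot all be clean transition matrices with determinants $\widetilde\Delta_{\TT_j}$; they necessarily carry the extraneous contributions too.

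The second, independent, problem is the asserted factorization $\EE(t)=\pm\prod_{j\in[0,t-D_{n+1}]\cup[t+1,\rho]}\widetilde\Delta_{\TT_j}$. The monomials of $\TT^*$ have degree strictly greater than $t$ and do not index columns of the degree-$t$ Macaulay--Chardin matrix at all (they enter the proof only through the $V_{\TT^*}$ block of $\OO_\SS$), so the range $[t+1,\rho]$ cannot arise from the construction of $\EE(t)$; moreover the degree-by-degree decomposition of the $f_1,\dots,f_n$ rows produces, besides the $\widetilde\Delta_{\TT_j}$, the degreewise Macaulay extraneous factors of the leading forms, which are not products of subresultants. The statement you actually need is the lemma $\det(M')=\pm\EE(t)\prod_{j=t-D_{n+1}+1}^{t}\widetilde\Delta_{\TT_j}$, proved by block-triangularizing $M'$ by $x_{n+1}$-degree and applying Chardin's formula degree by degree to $\widetilde f_1,\dots,\widetilde f_n$; with that in hand one cancels $\EE(t)$ between the two intermediate identities, which also shows why the case $\EE(t)=0$ requires a separate genericity or perturbation argument (as in the proof of Theorem~\ref{multtheorem}) that your proposal does not address. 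A minor further point: $N$ must have the same size as $M_\SS$, not $D\times D$; only a $D$-column sub-block of it consists of the evaluation vectors.
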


\smallskip
In order to generalize this result to  systems with multiple roots,
and obtain an expression for  the subresultant in terms of the roots
of the first $n$ polynomials $f_1, \ldots, f_n$, we need to
introduce notions of the multiplicity structure of the roots  that
are sufficient to define $( f_1, \ldots, f_n) $. To be more precise,
in the case of multiple roots, the set of  evaluation maps
$\{\mbox{ev}_{\bfxi}:A\to K\;| \  \bfxi\ \mbox{ common root of }
f_1,\dots,f_n\} $   is not anymore a basis
 of  $A^*$, the
dual of the quotient ring $A$ as a $K$-vector space, though still linearly independent.  Hence other forms must be considered in
order to describe $A^*$ and to get a non-singular matrix
generalizing $V_\TT(Z)$.

All along this section we will use the language of dual algebras to generalize
 Theorem \ref{subresultant2}
 for the multiple roots case
 (see  for instance in \cite{KK87,BCRS96} and the references therein).
 In Theorem~\ref{multtheorem} below we show that any basis of the dual $A^*$  gives
  rise to generalizations of   Theorem \ref{subresultant2}, as long as we assume that $\TT$ is a basis of $A$.
   This is the most general setting where a generalization of   Theorem \ref{subresultant2} will hold.
     However, this version of the Theorem, using general elements of the dual,
     does not give a formula for the subresultant in terms of the roots.

In order to obtain these expressions, we need to consider a specific basis of $A^*$
which contains the evaluation maps described above.
It turns out that one can define a basis for  $A^*$ in terms of  linear combinations of
 higher order derivative operators evaluated at roots of $f_1,\dots,f_n$.
 This is the content of the so called theory of ``inverse systems'' introduced by Macaulay in \cite{Mac16},
 and developed in a context closer  to our situation under the name of ``Gr\"obner duality " in \cite{Gr70,mamomo95,EM07}
  among others.

%First, we give a generalization of  Theorem \ref{subresultant2} to
%the multiple roots setting, or of Theorem~\ref{reph} to the
%multivariate case, by replacing the set
%$\{\mbox{ev}_{\bfxi_i}\}_{\,1\le i\le D}$ by any arbitrary basis of
%$A^*$.\\

\smallskip

The
following is a  multivariate analogue
of Definition~\ref{gW}:
\begin{definition}\label{D:MultHermite}
Let ${\bf\Lambda}:=\{\Lambda_1,\,\ldots,\,\Lambda_D\}$ be a basis of
$A^*$ as a $K$-vector space. Given any set
$E=\left\{\bfx^{\bfalpha_1}, \ldots, \bfx^{\bfalpha_u}\right\}$ of
$u$ monomials and given any polynomial $h\in K[\bfx],$  the {\em
generalized Vandermonde matrix} $V_{E}({\bf\Lambda})$ and the {\em
generalized Wronskian  matrix} $W_{h,E}({\bf\Lambda})$ corresponding
to $E$, ${\bf\Lambda}$ and $h$  are the following $u\times D$
matrices: {\small \begin{equation*}\label{E:Vandermondemult}
 V_{E}({\bf\Lambda}) = \begin{array}{|ccc|c}
\multicolumn{3}{c}{\scriptstyle{D}}\\
\cline{1-3}
\Lambda_1(\bfx^{\bfalpha_1}) &\cdots&\Lambda_D(\bfx^{\bfalpha_1})& \\
\vdots & & \vdots&  \scriptstyle u\\
\Lambda_1(\bfx^{\bfalpha_u}) &\cdots& \Lambda_D(\bfx^{\bfalpha_u})& \\
 \cline{1-3}
 \multicolumn{2}{c}{}
\end{array}  \
,\ W_{h,E}({\bf\Lambda})=\begin{array}{|ccc|c}
\multicolumn{3}{c}{\scriptstyle{D}}\\
\cline{1-3}
\Lambda_1(\bfx^{\bfalpha_1}h )&\cdots&\Lambda_D(\bfx^{\bfalpha_1}h)& \\
\vdots & & \vdots&  \scriptstyle u\\
\Lambda_1(\bfx^{\bfalpha_u}h )&\cdots& \Lambda_D(\bfx^{\bfalpha_u}h)& \\
 \cline{1-3}
 \multicolumn{2}{c}{}
\end{array}.
\end{equation*}}
\end{definition}

We modify  the definition of the matrix $\OO_\SS(Z)$ in (\ref{OS})
as follows:
\begin{definition}%\label{Os}
Let $\SS=\{\bfx^{\bfgamma_1},\ldots,\bfx^{\bfgamma_k}\}\subset
K[\bfx]_t$ be of cardinality  $k=\HH_{D_1\dots D_{n+1}}(t)$,
$\TT^*:=\cup_{j=t+1}^{\rho}\TT_j$ as in (\ref{prof})  and ${\mathcal
R}$ as in (\ref{R}).  Then {\small \begin{equation*}%\label{OS2}
\OO_\SS({\bf\Lambda}):=\begin{array}{|ccc|c}
\multicolumn{1}{c}{}&\multicolumn{1}{c}{\scriptstyle D}&
\multicolumn{1}{c}{}&
\\
\cline{1-3}
 & V_\SS({\bf\Lambda})& & \scriptstyle{k}\\%[-2mm]
\cline{1-3}
 & V_{\TT^*}({\bf\Lambda})& & \scriptstyle{s}\\%[-2mm
\cline{1-3}
 & W_{f_{n+1},\RR}({\bf\Lambda})& & \scriptstyle{r}\\%[-2mm]
 \cline{1-3} \multicolumn{2}{c}{}
\end{array}\ \in \ {K}^{D\times D}.
\end{equation*}}
\end{definition}

Note that by our assumption on $\TT$ being a basis of $A$ and
${\bf\Lambda}$ being a basis of $A^*$, we have $\det
\big(V_{\TT}({\bf\Lambda})\big)\neq 0$. The following is the
extension of Theorem~\ref{subresultant2} to the multiple roots case.

\begin{theorem}\label{multtheorem} Let $(f_1,\ldots.f_{n+1})\subset K[\bfx]$
and
 $ \TT:=\cup_{j\geq0}\TT_j$  specified in (\ref{prof}) satisfying our assumptions, and $\bf\Lambda$ be
 an arbitrary basis of  $A^*$. For any $t\in\Z_{\geq0}$ and  for any
$\SS=\{\bfx^{\bfgamma_1},\ldots,\bfx^{\bfgamma_k}\}\subset
K[\bfx]_t$ of cardinality  $k=\HH_{D_1\dots D_{n+1}}(t)$, we have
\begin{eqnarray*}%\label{multexpression}
\Delta_\SS(f_1,\dots,f_{n+1})=\pm \left(\prod_{j=t-D_{n+1}+1}^t
\widetilde \Delta_{\TT_{j}}\right)
\frac{\det\big(\OO_{\SS}({\bf\Lambda})\big)}{\det \big(
V_{\TT}({\bf\Lambda})\big)}.
\end{eqnarray*}
\end{theorem}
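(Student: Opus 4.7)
The plan is to reduce the theorem to the simple-roots case (Theorem \ref{subresultant2}) via two ingredients: (i) the right-hand side is independent of the choice of basis ${\bf\Lambda}$ of $A^*$, and (ii) a deformation of $(f_1,\ldots,f_n)$ that makes the roots simple and allows a direct invocation of Theorem \ref{subresultant2}.

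For (i), suppose ${\bf\Lambda}'$ is another basis of $A^*$ related to ${\bf\Lambda}$ by $\Lambda'_j = \sum_i P_{ji}\,\Lambda_i$ for some invertible $P \in K^{D\times D}$. Every entry of $V_\TT({\bf\Lambda})$, and of the three blocks $V_\SS({\bf\Lambda})$, $V_{\TT^*}({\bf\Lambda})$, $W_{f_{n+1},\RR}({\bf\Lambda})$ defining $\OO_\SS({\bf\Lambda})$, is of the form $\Lambda_j(\psi)$ for some $\psi \in A$, so both matrices transform column-wise by $P^T$: one checks $V_\TT({\bf\Lambda}') = V_\TT({\bf\Lambda})\,P^T$ and $\OO_\SS({\bf\Lambda}') = \OO_\SS({\bf\Lambda})\,P^T$. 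Both determinants acquire the factor $\det P^T$ and the ratio $\det\OO_\SS({\bf\Lambda})/\det V_\TT({\bf\Lambda})$ is invariant.

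For (ii), I would perturb the first $n$ polynomials as $f_i(\tau) := f_i + \tau\,g_i$ with generic $g_i$ of degree strictly less than $D_i$, so that the leading forms $\widetilde f_i$, and hence each factor $\widetilde\Delta_{\TT_j}$, remain unchanged, and so that for $\tau$ in a Zariski-dense open subset of $K$ the perturbed system has $D$ distinct simple common roots and $\TT$ remains a basis of $A(\tau) := K[\bfx]/\bigl(f_1(\tau),\ldots,f_n(\tau)\bigr)$. For such $\tau$ Theorem \ref{subresultant2} applies with the evaluation basis at the $D$ simple roots, and by (i) it gives
$$\Delta_\SS\bigl(f_1(\tau),\ldots,f_{n+1}\bigr)\,\det V_\TT\bigl({\bf\Lambda}(\tau)\bigr) = \pm\Bigl(\prod_j \widetilde\Delta_{\TT_j}\Bigr)\,\det\OO_\SS\bigl({\bf\Lambda}(\tau)\bigr)$$
for any basis ${\bf\Lambda}(\tau)$ of $A(\tau)^*$. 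It remains to construct a family ${\bf\Lambda}(\tau)$ specializing at $\tau=0$ to the prescribed ${\bf\Lambda}$: fixing $M$ large enough, both matrices depend only on the restriction of the $\Lambda_j$'s to $K[\bfx]_{\leq M}$, while $A(\tau)^*$ is realized as the annihilator in $K[\bfx]_{\leq M}^*$ of the subspace $\bigl(f_1(\tau),\ldots,f_n(\tau)\bigr) \cap K[\bfx]_{\leq M}$, which varies algebraically in $\tau$ with constant dimension $D$ on a neighborhood of $0$; any basis at $\tau=0$ then extends to an algebraic family on a neighborhood. Both sides of the identity are regular at $\tau=0$, since $\det V_\TT({\bf\Lambda}(0)) = \det V_\TT({\bf\Lambda}) \neq 0$ by hypothesis, so specializing $\tau \to 0$ yields the theorem.

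The main obstacle will be the deformation step, specifically verifying the algebraic variation of $A(\tau)^*$ and the construction of the extending family ${\bf\Lambda}(\tau)$. A possible alternative route would be to directly generalize the matrix factorization underlying the proof of Theorem \ref{subresultant2} in \cite{DKS2006}, using the identity $\det M_\SS = \EE(t)\,\Delta_\SS$ from (\ref{Extra}) and substituting the generalized matrices $V_E({\bf\Lambda})$ in place of the evaluation Vandermonde, with Macaulay/Gorenstein duality playing the role of the Chinese Remainder decomposition that is available only in the simple-roots case.
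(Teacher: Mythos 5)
Your proposal is correct, but it takes a genuinely different route from the paper's. The paper does \emph{not} reduce to the simple-root Theorem~\ref{subresultant2}: it reruns the matrix factorization of \cite{DKS2006} with an arbitrary dual basis ${\bf\Lambda}$ in place of the evaluation maps, obtaining $\pm\,\EE(t)\,\Delta_\SS\,\det\big(V_\TT({\bf\Lambda})\big)=\pm\det(M')\det\big(\OO_\SS({\bf\Lambda})\big)$ directly in the multiple-root setting, combines this with $\det(M')=\pm\,\EE(t)\prod_j\widetilde\Delta_{\TT_j}$, and invokes a perturbation (Canny's $f_{i,\lambda}=f_i+\lambda x_i^{D_i}$, which changes the leading forms but forces $\EE_\lambda(t)=\det(E_t+\lambda I)\neq0$) only to dispose of the degenerate case $\EE(t)=0$ --- not to make the roots simple. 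This is essentially the ``alternative route'' you sketch in your last sentence. Your reduction-to-simple-roots strategy is viable but needs two inputs you only assert: (a) that the simple-root locus is dense among systems with the fixed leading forms $\widetilde f_i$ (true, e.g.\ by generic smoothness of the finite map $\bfx\mapsto(\widetilde f_1(\bfx),\ldots,\widetilde f_n(\bfx))$ together with openness of the discriminant condition, but it must be justified); and (b) the algebraic family ${\bf\Lambda}(\tau)$ specializing to ${\bf\Lambda}$. For (b) your annihilator-of-$I(\tau)\cap K[\bfx]_{\le M}$ argument can be made to work (using that the $f_i(\tau)$ form an H-basis since their leading forms have no common zero), but it is heavier than necessary: since $\TT$ remains a basis of $A(\tau)$ near $\tau=0$, one can write $\Lambda_i=\sum_k c_{ik}\bfy_{\bfalpha_k}$ in the dual basis of $\TT$ and set $\Lambda_i(\tau):=\sum_k c_{ik}\bfy_{\bfalpha_k,\tau}$, checking regularity at $\tau=0$ of the entries of $\OO_\SS({\bf\Lambda}(\tau))$ by expressing each monomial in the basis $\TT$ of $A(\tau)$ with a denominator not vanishing at $\tau=0$ --- this is exactly the paper's construction of ${\bf\Lambda}_\lambda$, which you could borrow verbatim. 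Your step (i), basis-independence via $V_\TT({\bf\Lambda}')=V_\TT({\bf\Lambda})P^T$ and $\OO_\SS({\bf\Lambda}')=\OO_\SS({\bf\Lambda})P^T$, is correct and is implicitly used in the paper when it asserts its perturbed identity for \emph{any} basis ${\bf\Gamma}_\lambda$. In sum, your approach buys a black-box reduction to the published simple-root theorem at the cost of a genericity argument; the paper's buys a proof that never needs the roots to be simple, confining the perturbation to the exceptional case $\EE(t)=0$.
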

\smallskip

\begin{proof}[Proof of Theorem \ref{multtheorem}]
The proof is similar to the  proof of Theorem~$3.2$ in
\cite{DKS2006}, to which  we refer  for   notations and details.
Extra care must be taken however, as we are not anymore considering
the polynomials $f_1,\dots, f_n$ to have simple common roots.
\\
Using the exact same argument as in the proof of Theorem~$3.2$ in
\cite{DKS2006} we can prove that
$$\pm \,\EE(t) \,\Delta_\SS(f_1,\dots,f_{n+1}) \,\det\big(V_\TT({\bf\Lambda})\big)=\pm \det(M')\det\big(
\OO_\SS({\bf\Lambda})\big), $$ where
\begin{equation}\label{M'}M':=\begin{array}{|c|}\cline{1-1}M'_{f_1}\\ \vdots \\
\;\;M'_{f_n}\;\;\\ \cline{1-1}\end{array},
\end{equation}
is the submatrix of \eqref{sylv}
%\begin{equation}\label{sylv}
%\left[\begin{array}{c}\;\;M_{f_1}\;\;\\[-2mm]\vdots \\[-2mm]\;\;M_{f_{n}}\;\;\end{array}\right]
%\end{equation}
obtained by removing the columns corresponding the monomials in $\TT$. In \cite{DKS2006} we
also showed that
$$\det(M')=\pm \EE(t) \,\bigg(\prod_{j=t-D_{n+1}+1}^t
\widetilde \Delta_{\TT_{j}}\bigg),$$ so   the claim is proved when
$\EE(t)\neq0$.
\\
If $\EE(t)=0$, we consider a perturbation ``\`a la Canny'' as in
\cite{Can1990}, i.e. we replace $f_i$ by $f_{i,\lambda}:=f_i+
\lambda \,x_i^{D_i}\in K(\lambda)[\bfx]$, where $\lambda$ is a new
parameter,  for $ 1\le i\le n$. It is easy to see that this
perturbed system has no roots at infinity over the algebraic closure
$\overline{K(\lambda)}$ of $K(\lambda)$, since the leading term in $\lambda$ of the
resultant of its homogeneous components of degrees $D_1,\dots,D_n$
does not vanish, and   hence the dimension of the quotient ring
$A_\lambda:=K(\lambda)[\bfx]/(f_{1,\lambda},\ldots,f_{n,\lambda})$ as a  $K(\lambda)$-vector space
is also equal to $D$.
\\
It can also be shown (see \cite{Can1990}) that $\EE_\lambda(t)\ne
0$, where $\EE_\lambda(t)$ denotes the extraneous factor in
Macaulay's formulation applied to the polynomials
$f_{i,\lambda},\,1\le i\le n$. Indeed, if $E_t$ is the matrix whose
determinant gives $\EE(t)$ with rows and columns ordered properly,
it is easy to see that the perturbed matrix is equal to
$E_t+\lambda\,I$, where $I$ is the identity matrix.
\\
Therefore, the statement holds for this perturbed family:
\begin{equation}\label{mress}
\Delta_\SS(f_{1,\lambda},\dots,f_{n,\lambda},f_{n+1})=\pm
\left(\prod_{j=t-D_{n+1}+1}^t
\widetilde\Delta_{\TT_j,\lambda}\right)
\frac{\det\big(\OO_{\SS}({\bf\Gamma_\lambda})\big)}{\det
\big(V_{\TT}({\bf\Gamma_\lambda})\big)}
\end{equation}
for \textit{any} basis ${\bf\Gamma_\lambda}$ of $A_\lambda^*$ (here,
$\widetilde \Delta_{\TT_j,\lambda}=
\Delta_{\TT_j}^{(j)}(\widetilde{f}_{1,\lambda},\ldots,\widetilde{f}_{n,\lambda})$).\\
The subresultants appearing in \eqref{mress} are polynomials in
$\lambda $, that, when evaluated in $\lambda=0$, satisfy:
$$
\left.\Delta_\SS(f_{1,\lambda},\dots,f_{n,\lambda},f_{n+1})\right|_{\lambda=0}=\Delta_\SS(f_{1},\dots,f_{n},f_{n+1}),
\
\left.\widetilde\Delta_{\TT_j,\lambda}\right|_{\lambda=0}=\widetilde\Delta_{\TT_j},
\ \forall j.
$$ So, in order to prove the claim, it is enough to show  that there exists a
basis
 of $A_\lambda^*$ which ``specializes'' to ${\bf\Lambda}$
when setting $\lambda=0$, i.e. to find a basis
${\bf\Lambda}_\lambda$ of $A_\lambda^*$ such that
\begin{equation}\label{final}
\left.\frac{\det\big(\OO_{\SS}({\bfLambda_\lambda})\big)}{\det
\big(V_{\TT}({\bfLambda_\lambda})\big)}\right|_{\lambda=0}=\frac{\det\big(\OO_{\SS}({\bfLambda})\big)}{\det
\big(V_{\TT}({\bfLambda})\big)},
\end{equation}
and then to apply  Identity \eqref{mress} to ${\bfLambda_\lambda}$
and to specialize it at $\lambda=0$.

\smallskip
\noindent We now construct the basis $\bf\Lambda_\lambda$: The
monomial basis
$\TT=\{\bfx^{\bfalpha_1},\ldots,\bfx^{\bfalpha_{D}}\}$  of $A$
is also a monomial basis of  $A_\lambda$, since clearly linearly
independent, and therefore it defines the dual bases
$\{\bfy_{\bfalpha_1},\ldots,\bfy_{\bfalpha_{D}}\}\subset A^*$ and
$\{\bfy_{\bfalpha_1,\lambda},\ldots,\bfy_{\bfalpha_{D}
,\lambda}\}\subset A^*_\lambda$, satisfying for   $1\le j, k\le D$,
$$
\bfy_{\bfalpha_k}(\bfx^{\bfalpha_j})=\bfy_{\bfalpha_{k},\lambda}(\bfx^{\bfalpha_{j}})=1
\
 \mbox{ if } \  k=j \ \mbox{ and   } \  0 \ \mbox{ otherwise }.
$$
We write ${\Lambda}_i=\sum_{k=1}^Dc_{ik}{\bfy_{\bfalpha_k}}$ for
$1\le i\le D$, where $c_{ik}\in K$, and then set
${\bf\Lambda}_\lambda:=\{{\Lambda}_{1,\lambda},\ldots,{\Lambda}_{D,\lambda}\}$,
with
$\Lambda_{i,\lambda}:=\sum_{k=1}^Dc_{ik}{\bfy_{\bfalpha_k,\lambda}},
\ 1\le i\le  D$. Note that
\begin{eqnarray}
&\Lambda_{i,\lambda}(\bfx^{\bfalpha_j})=\Lambda_{i}(\bfx^{\bfalpha_j})=c_{ij}
\ \mbox{ for } \ 1\le i,j\le D, \label{mress2prime} \\  &\det
\big(V_{\TT}({\bf\Lambda_\lambda})\big)=\det
\big(V_{\TT}({\bf\Lambda})\big)=\det\big(c_{ij}\big)_{1\leq i,j\leq
D}\in K\setminus\{0\}, \label{mress2}
\end{eqnarray}
as the matrix $(c_{ij})_{1\leq i,j\leq D}$ is invertible. This
implies that ${\bf\Lambda_\lambda}$ is a basis of $A_\lambda^*$.\\
We claim now that, for every $\bfalpha\in\N^n,$ there exist
polynomials $p_\bfalpha$ and $ \,A_{j,\bfalpha} $, $1\le j\le D$, in
$ K[\lambda]$    such that
\begin{equation}\label{mress4}
p_\bfalpha(\lambda)\,{\bfx}^\bfalpha=\sum_{j=1}^DA_{j,\bfalpha}(\lambda){\bfx}^{\bfalpha_j}\
\mbox{ in } \ A_\lambda, \quad \mbox{and} \quad p_\bfalpha(0)\neq0.
\end{equation}
%We claim now that, for every $\bfalpha\in\N^n,$ there exist
%polynomials $p_\bfalpha ,\,A_{j,\bfalpha} $ and $B_{i,\bfalpha} \in
%K[\lambda],$   $1\leq j\leq D,\,1\leq i\leq n$, such that
%\begin{equation}\label{mress4}
%p_\bfalpha(\lambda)\,{\bfx}^\bfalpha=\sum_{j=1}^DA_{j,\bfalpha}(\lambda){\bfx}^{\bfalpha_j}+
%\sum_{i=1}^nB_{i,\bfalpha}(\lambda)\,f_{i,\lambda} \quad \mbox{and}
%\quad p_\bfalpha(0)\neq0.
%\end{equation}
For  this, it suffices to express the monomial
 ${\bfx}^\bfalpha$ in terms of the basis ${\mathcal T}$ of
 $A_\lambda$ and take
$p_{\bfalpha}(\lambda)$ as a common denominator when lifting the
expression
 to $K[\lambda][{\bfx}]$,  satisfying  the condition
$\gcd\big(p_\bfalpha ,\,A_{j,\bfalpha} , 1\le  j\le D)=1$. It is
clear that $p_\bfalpha(0)\ne 0$ because $\TT$ is also a basis of
$A$, and by assumption $0$ is not a common root of $p_\bfalpha$ and
$A_{j,\bfalpha}$, $1\le j\le D$.
% For this, it suffices to express the
%monomial
% ${\bfx}^\bfalpha$ in terms of the basis ${\mathcal T}$ modulo the ideal
% $\big( f_{1,\lambda},\ldots, f_{n,\lambda}\big)$ in
% $K(\lambda)[{\bfx}]$, and take
%$p_{\bfalpha}(\lambda)$ as a common denominator when lifting the
%expression
% to $K[\lambda][{\bfx}]$, moreover satisfying  the condition
%$\gcd\big(p_\bfalpha ,\,A_{j,\bfalpha} ,\,B_{i,\bfalpha}; \, \forall
%\,j,i \big)=1$. We still need to prove that $p_\alpha(0)\ne 0$.
%Suppose that $p_\alpha(0)=0$. Then, by setting $\lambda=0$ in
%Identity \eqref{mress4}, we get
%$$0=\sum_{j=1}^DA_{j,\bfalpha}(0){\bfx}^{\bfalpha_j}+\sum_{i=1}^nB_{i,\bfalpha}(0)\,f_i,
%$$
%which implies $A_{j,\bfalpha}(0)=0$ for $1\le j\le D$ since $\TT$ is
%a basis of $A$.  Then $0=\sum_{i=1}^nB_{i,\bfalpha}(0)\,f_i$ implies
%that $B_{i,\bfalpha}(0)=0$  for $1\le i\le n$ since $f_1,\dots,f_n$
%cannot be linearly dependent as they define a zero-dimensional
%ideal. This would imply that
%$p_\bfalpha,\,A_{j,\bfalpha}\,B_{i,\bfalpha}$ share a root, which
%contradicts the assumption
%$\gcd\big(p_\bfalpha,\,A_{j,\bfalpha},\,B_{i,\bfalpha}\big)=1$.
\\
Applying $\Lambda_{i,\lambda}$ to  Identity~\eqref{mress4} and
$\Lambda_i$ to Identity~\eqref{mress4} specialized at $\lambda=0$,
we then get by \eqref{mress2prime} for $ 1\le i\le D$:
$$
p_\bfalpha(\lambda)\Lambda_{i,\lambda}({\bfx}^\bfalpha)=\sum_{j=1}^D
c_{ij} A_{j,\bfalpha}(\lambda) \ \mbox{ and } \
p_\bfalpha(0)\Lambda_{i}({\bfx}^\bfalpha)=\sum_{j=1}^D c_{ij}
A_{j,\bfalpha}(0).
$$
This implies that the entries of the matrix
$\OO_{\SS}({\bf\Lambda_\lambda})$ are the same $K$-linear
combinations of the quotients
$\frac{A_{j,\bfalpha}(\lambda)}{p_\bfalpha(\lambda)}$ than the
entries of the matrix $\OO_{\SS}({\bf\Lambda})$ in terms of
$\frac{A_{j,\bfalpha}(0)}{p_\bfalpha(0)}$. This  and Identity
\eqref{mress2}  implies \eqref{final}, which proves the statement.
\end{proof}

 As we mentioned before, for an arbitrary basis ${\bf\Lambda} $ of $A^*$ the expression in Theorem~\ref{multtheorem} may not provide a formula  in terms of the roots of $f_1, \ldots, f_n$.
 In order to obtain one, we recall here the notion of Gr\"obner duality from
 \cite{mamomo95}.\\
 For $\bfalpha=(\alpha_1, \ldots, \alpha_n)\in \N^n$ define the differential operator
 $$
 \partial_{\bfalpha} :=
  \frac{1}{\alpha_1!\cdots  \alpha_n!}\,\frac{\partial ^{|\bfalpha|}}
  {\partial x_1^{\alpha_1}\cdots \partial x_n^{\alpha_n}}
 $$
  and consider the ring $K[[\partial]]:=\{\sum_{\bfalpha\in \N^n} a_\bfalpha\partial_\bfalpha : a_\bfalpha\in
  K\}$.

  \smallskip \noindent
For $1\le i\le  n$ define the $K$-linear map
$$\sigma_i:
K[[\partial]] \to K[[\partial]]; \
\sigma_i\left(\partial_{\bfalpha} \right)=\begin{cases} \partial_{(\alpha_1, \ldots, \alpha_i-1,\ldots, \alpha_n)} & if \; \alpha_i>0,\\
0 &\text{otherwise},
\end{cases}
$$
and for $\bfbeta=(\beta_1, \ldots, \beta_n)\in \N^n$ define
$\sigma_\bfbeta= \sigma_1^{\beta_1}\circ
\cdots\circ\sigma_n^{\beta_n}$.

\smallskip\noindent
A K-vector space $V\subset K[[\partial]]$ is {\em closed} if
$\dim_K(V)$ is finite  and for all $\bfbeta\in \N^n$ and ${\bf D}
\in V$ we have  $\sigma_\bfbeta({\bf D})\in V$.  Note that
$K[[\partial]]$ and its closed subspaces have a natural
$K[{\bfx}]$-module structure given by ${\bfx}^\bfbeta {\bf
D}(f):={\bf D}({\bfx}^\bfbeta f)= \sigma_\bfbeta({\bf D})(f)$.

\smallskip \noindent
Let  $\bfxi \in {K}^n$. For a closed subspace $V\subset
K[[\partial]]$ define
$$
\nabla_\bfxi(V):=\{f\in  K[\bfx] \;:\; {\bf D}(f)(\bfxi)=0
,\;\;\forall \,{\bf D}\in V\}\ \subset \ K[\bfx].
$$
Let ${\bf m}_\bfxi\subset  K[{\bfx}]$  be the maximal ideal defining
$\bfxi$. For an ideal $J\subset  {\bf m}_\xi$ define
$$
\Delta_\bfxi(J):=\{{\bf D}\in  K[[\partial]]\;:\; {\bf
D}(f)(\bfxi)=0, \;\;\forall \,f\in J\} \ \subset \ K[[\partial]].
$$
Then the following theorem gives the so called Gr\"obner duality:

\begin{theorem}[\cite{Gr70,mamomo95}] Fix $\bfxi \in {K}^n$.
The correspondences between  closed subspaces $V\subset
K[[\partial]]$ and ${\bf m}_\xi$-primary ideals $Q$, $V\mapsto
\nabla_\bfxi(V)$ and  $ Q \mapsto \Delta_\bfxi(Q)  $ are 1-1 and
satisfy $V=\Delta_\bfxi(\nabla_\bfxi(V))$ and
$Q=(\nabla_\bfxi(\Delta_\bfxi(Q)))$. Moreover,
$$
\dim_{K}(\Delta_\bfxi(Q))={\rm mult}(Q)\quad \mbox{and}\quad {\rm
mult}(\nabla_\bfxi(V))=\dim_{K}(V).
$$
\end{theorem}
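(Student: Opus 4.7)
The plan is to set up Macaulay's duality between ideals and differential operators explicitly. First I would translate coordinates to reduce to the case $\bfxi=0$, so that $\mathbf{m}_\bfxi$ becomes $\mathbf{m}_0=(x_1,\ldots,x_n)$; both $\Delta_\bfxi$ and $\nabla_\bfxi$ intertwine with this translation. The core object is the bilinear pairing
$$
\langle\,\cdot\,,\,\cdot\,\rangle:K[[\partial]]\times K[\bfx]\longrightarrow K,\qquad \langle \mathbf{D},f\rangle:=\mathbf{D}(f)(0).
$$
The normalization of $\partial_\bfalpha$ gives the clean identity $\langle \partial_\bfalpha,\bfx^\bfbeta\rangle=\delta_{\bfalpha,\bfbeta}$, so the pairing is perfect in the sense that it identifies $K[[\partial]]$ with $\mathrm{Hom}_K(K[\bfx],K)$ and $K[\bfx]$ with the subspace of functionals on $K[[\partial]]$ of finite monomial support. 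The adjunction $\langle \sigma_\bfbeta(\mathbf{D}),f\rangle=\langle \mathbf{D},\bfx^\bfbeta f\rangle$ is immediate from the definitions and translates the notion of a \emph{closed} subspace $V\subset K[[\partial]]$ into that of a finite-dimensional $K[\bfx]$-submodule under the module action $\bfx^\bfbeta\cdot \mathbf{D}=\sigma_\bfbeta(\mathbf{D})$.

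With the pairing in place, the maps $\Delta_0(\cdot)$ and $\nabla_0(\cdot)$ become the two orthogonal-complement operations. For an $\mathbf{m}_0$-primary ideal $Q$ the quotient $K[\bfx]/Q$ is a finite-dimensional local Artinian $K$-algebra with $\mathbf{m}_0^N\subset Q$ for some $N$, so every $K$-linear functional on $K[\bfx]/Q$ automatically vanishes on monomials $\bfx^\bfalpha$ with $|\bfalpha|\ge N$ and is therefore represented by a \emph{finite-order} element of $K[[\partial]]$. This identifies $\Delta_0(Q)=Q^\perp$ with $\mathrm{Hom}_K(K[\bfx]/Q,K)$, yielding $\dim_K\Delta_0(Q)=\mathrm{mult}(Q)$; the inclusion $\bfx^\bfbeta Q\subset Q$ transfers through the adjunction to $\sigma_\bfbeta(\Delta_0(Q))\subset \Delta_0(Q)$, making $\Delta_0(Q)$ closed. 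Conversely, if $V$ is closed then the $K[\bfx]$-submodule property makes $\nabla_0(V)$ an ideal, and the perfect pairing descends to $K[\bfx]/\nabla_0(V)\times V\to K$; finite-dimensionality of $V$ forces $\mathbf{m}_0^N\subset \nabla_0(V)$, so $\nabla_0(V)$ is $\mathbf{m}_0$-primary with $\mathrm{mult}(\nabla_0(V))=\dim_K V$.

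The two composites $V\mapsto \Delta_0(\nabla_0(V))$ and $Q\mapsto(\nabla_0(\Delta_0(Q)))$ are then the standard double-orthogonal operations with respect to a perfect pairing between finite-dimensional spaces, so each returns the original object; the multiplicity statements fall out as direct corollaries of the dimension count in the previous step. The delicate point, and the step I expect to be the main obstacle, is the surjectivity in the identification $\Delta_0(Q)\simeq\mathrm{Hom}_K(K[\bfx]/Q,K)$: one must verify that every $K$-linear functional on the finite-dimensional quotient is genuinely realized by a finite-order differential operator lying in $K[[\partial]]$. This is where the $\mathbf{m}_0$-primariness of $Q$ (to truncate the order of the operators) and the monomial duality $\partial_\bfalpha\leftrightarrow \bfx^\bfalpha$ (to write down the operators explicitly from a dual basis of $K[\bfx]/Q$) must both be used; once this is done, everything else is formal manipulation of the perfect pairing.
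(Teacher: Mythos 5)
First, a point of reference: the paper does not prove this theorem at all --- it is quoted from \cite{Gr70,mamomo95} and used as a black box --- so there is no internal proof to compare yours against. Your sketch is the standard Macaulay/apolarity argument found in those references: reduce to $\bfxi=0$, observe that $\langle \mathbf{D},f\rangle:=\mathbf{D}(f)(0)$ identifies $K[[\partial]]$ with the full linear dual of $K[\bfx]$ via $\langle\partial_\bfalpha,\bfx^\bfbeta\rangle=\delta_{\bfalpha\bfbeta}$, that $\sigma_\bfbeta$ is adjoint to multiplication by $\bfx^\bfbeta$, and that $\Delta_0$ and $\nabla_0$ are the two annihilator maps; the dimension counts and double-annihilator identities then follow (using that $K[[\partial]]$ is the \emph{full} dual for $Q=\nabla_0(\Delta_0(Q))$, and finite-dimensionality of $V$ for $V=\Delta_0(\nabla_0(V))$). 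This is all sound. Incidentally, the point you single out as delicate --- realizing every functional on $K[\bfx]/Q$ by a finite-order operator --- is actually immediate: extend $\ell$ to $K[\bfx]$, write it as $\sum_\bfalpha \ell(\bfx^\bfalpha)\partial_\bfalpha$, and note the coefficients vanish for $|\bfalpha|\ge N$ because $\mathbf{m}_0^N\subset Q$.

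The step that genuinely fails as written is in the converse direction: ``finite-dimensionality of $V$ forces $\mathbf{m}_0^N\subset\nabla_0(V)$.'' With the paper's definition of closed subspaces of $K[[\partial]]$ this is false. For $\lambda\in K\setminus\{0\}$, the one-dimensional space $V=K\cdot\sum_{k\ge0}\lambda^k\partial_{k\bfe_1}$ is closed ($\sigma_1$ acts on it by the scalar $\lambda$ and the other $\sigma_i$ by $0$), yet its generator is evaluation at $(\lambda,0,\dots,0)$ by Taylor's formula, so $\nabla_0(V)=\mathbf{m}_{(\lambda,0,\dots,0)}$, which is not $\mathbf{m}_0$-primary. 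What your argument actually needs is that every element of $V$ has finite order, equivalently that each $\sigma_i|_V$ is nilpotent; this holds if one works inside the polynomial subring $K[\partial]\subset K[[\partial]]$ (as \cite{mamomo95} effectively does), but it does not follow from finite dimensionality plus $\sigma$-stability alone. Once nilpotency is imposed, all monomials of sufficiently high degree are annihilated by $V$, and the rest of your argument goes through. To be fair, this gap is inherited from an imprecision in the statement as transcribed in the paper rather than from a faulty idea of yours, but a complete proof must either add the nilpotency hypothesis or prove it from a corrected definition of ``closed.''
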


 \noindent
We  set  $Z:=\{{\bfxi}_1,\dots, {\bfxi}_{D}\}$ for  the set of all
common roots of $f_1,\dots, f_n$ in $ K^n$, with multiplicities, and
${\bf m}_{\bfxi_i}\subset   K[{\bfx}]$ for  the maximal ideal
corresponding to $\bfxi_i$ for $1\le i\le m$.

   \begin{example}
 (\cite[Exemple $7.37$]{EM07}) \\Let
$f_1=2x_1x_2^2+ 5x_1^4$, $f_2=2x_1^2x_2+5x_2^4\ \in \C[x,y]$.\\ Then
$Z=\{ {\mathbf{0}},\bfxi_1,\bfxi_2,\bfxi_3,\bfxi_4,\bfxi_5\}$ where
$ { \mathbf{0}} =(0,0)$ has  multiplicity eleven and
$\bfxi_i=(\frac{-2}{5\xi^{2i}},\frac{-2}{5\xi^{3i}})$ where $\xi$ is
a primitive 5-th root of unity, are all simple,  $1\le i\le 5$. \\
Denote by $Q_{\mathbf{0}}$, $Q_{\bfxi_i}$,  $1\le i\le 5$, the
primary ideals corresponding to the roots, then
$\Delta_{\bfxi_i}(Q_{\bfxi_i})=\langle 1\rangle$ for $1\le i\le 5$
and if  $\{\bfe_1,\bfe_2\}$ is the canonical basis of $\Z^2$,
\begin{align*} \Delta_{\mathbf{0}}(Q_{\mathbf{0}})=&\langle
1 ,\partial_{\bfe_1} ,\,\partial_{\bfe_2} ,\,\partial_{2\bfe_1} ,\,
\partial_{\bfe_1+\bfe_2} ,\,\partial_{2\bfe_2} ,\,\partial_{3\bfe_1} ,
\,\partial_{3\bfe_2} ,
\\ &
 (4\partial_{4\bfe_1}-5\partial_{\bfe_1+2\bfe_2}) ,\,
 (4\partial_{4\bfe_2}-5\partial_{2\bfe_1+\bfe_2}) ,
(3\partial_{2\bfe_1+3\bfe_2}-\partial_{5\bfe_1}-\partial_{5\bfe_2})
\rangle.
\end{align*}
\end{example}

Using Gr\"obner duality, we are now able to give an expression for
the subresultant in terms of the roots of $f_1, \ldots, f_n$.
 For ${\bf D}\in
  K[[\partial]]$and $\bfxi\in K^n$, we denote by
   ${\bf D}|_\bfxi$ the element of $A^*$ defined as  ${\bf D}|_\bfxi(f)={\bf D}(f)(\bfxi)$.
   In particular, under this notation, $1|_\bfxi =
   \mbox{ev}_{\bfxi}$.

\begin{corollary}
Using our previous assumptions, let  $I=(f_1, \ldots ,f_n)$ and
$$
I =Q_1\cap \cdots \cap Q_m
$$
be the primary decomposition of $ I $, where $Q_i$ is a ${\bf
m}_{\bfxi_i}$-primary ideal with $d_i:={\rm mult}(Q_i)$. For $1\le
i\le  m$ let $V_i:=\Delta_{\bfxi_i}(Q_i)\subset  K[[\partial]]$ be
the corresponding closed subspace, and fix a basis $ \{{\bf
D}_{i,1}, \ldots, {\bf D}_{i,d_i}\}$ for $V_i$ such that ${\bf
D}_{i,1}=1$.  Then
 $${\bf \Lambda}:=\{{\bf
D}_{1,1}|_{\bfxi_1},\ldots, {\bf D}_{1,d_1}|_{\bfxi_1}, \ldots, {\bf
D}_{m,1}|_{\bfxi_m},\ldots,{\bf D}_{m,d_m}|_{\bfxi_m}\}$$ is a basis
of $A^*$ over ${K}$.
\end{corollary}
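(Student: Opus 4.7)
The plan is to show linear independence of the $D$ functionals in $\mathbf{\Lambda}$; since $\sum_{i=1}^m d_i = \sum_{i=1}^m \mathrm{mult}(Q_i) = \dim_K A = D$ (by the Chinese Remainder decomposition $A \cong \bigoplus_i K[\bfx]/Q_i$), linear independence will force $\mathbf{\Lambda}$ to be a basis of $A^*$.

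First, I would check that each $\mathbf{D}_{i,j}|_{\bfxi_i}$ is a well-defined element of $A^*$, i.e.\ vanishes on $I$. Since $I = Q_1 \cap \cdots \cap Q_m \subseteq Q_i$ and $\mathbf{D}_{i,j} \in V_i = \Delta_{\bfxi_i}(Q_i)$, by the very definition of $\Delta_{\bfxi_i}$ we have $\mathbf{D}_{i,j}(f)(\bfxi_i) = 0$ for every $f \in Q_i$, hence for every $f \in I$. So each $\mathbf{D}_{i,j}|_{\bfxi_i}$ descends to a linear form on $A$.

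Next, to separate the different roots, I would exploit the fact that the primary ideals $Q_1, \ldots, Q_m$ are pairwise comaximal (they have distinct associated maximal ideals $\mathbf{m}_{\bfxi_i}$). By the Chinese Remainder Theorem applied inside $K[\bfx]$, for each $k \in \{1,\ldots,m\}$ there exists $e_k \in K[\bfx]$ such that $e_k - 1 \in Q_k$ and $e_k \in Q_l$ for every $l \neq k$. Suppose now that $\sum_{i,j} c_{i,j}\, \mathbf{D}_{i,j}|_{\bfxi_i} = 0$ in $A^*$, i.e.\ $\sum_{i,j} c_{i,j}\, \mathbf{D}_{i,j}(f)(\bfxi_i) = 0$ for every $f \in K[\bfx]$. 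Applying this identity with $f$ replaced by $f \cdot e_k$ for an arbitrary $f \in K[\bfx]$, the contributions with $i \neq k$ vanish because $f \cdot e_k \in Q_i$ and $\mathbf{D}_{i,j} \in \Delta_{\bfxi_i}(Q_i)$, while for $i = k$ the relation $f\cdot(e_k - 1) \in Q_k$ together with $\mathbf{D}_{k,j} \in \Delta_{\bfxi_k}(Q_k)$ yields $\mathbf{D}_{k,j}(f \cdot e_k)(\bfxi_k) = \mathbf{D}_{k,j}(f)(\bfxi_k)$. Therefore
\[
\sum_{j=1}^{d_k} c_{k,j}\, \mathbf{D}_{k,j}(f)(\bfxi_k) \;=\; 0 \qquad \text{for every } f \in K[\bfx].
\]

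Finally, I would conclude by invoking the perfect pairing $V_k \times K[\bfx]/Q_k \to K$, $(\mathbf{D},f) \mapsto \mathbf{D}(f)(\bfxi_k)$, which arises from Gr\"obner duality: the pairing descends by the definition of $V_k = \Delta_{\bfxi_k}(Q_k)$, and its non-degeneracy on the $V_k$ side follows from the duality statement $V_k = \Delta_{\bfxi_k}(\nabla_{\bfxi_k}(V_k)) = \Delta_{\bfxi_k}(Q_k)$ together with the matching dimensions $\dim_K V_k = \dim_K K[\bfx]/Q_k = d_k$. Hence the displayed vanishing forces $\sum_{j} c_{k,j} \mathbf{D}_{k,j} = 0$ in $V_k$, and since $\{\mathbf{D}_{k,1},\ldots,\mathbf{D}_{k,d_k}\}$ is a basis of $V_k$, all coefficients $c_{k,j}$ vanish. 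Running this argument for every $k$ gives the desired linear independence. The main delicate step is the last one: cleanly justifying the non-degeneracy of the local pairing from the Gr\"obner duality correspondence, since without it one only obtains a vanishing condition on $A$ rather than on the local quotient $K[\bfx]/Q_k$ where $\{\mathbf{D}_{k,j}\}$ is known to be dual.
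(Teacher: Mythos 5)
Your argument is correct, and it is worth noting that the paper itself supplies no proof of this corollary: it is stated as an immediate consequence of the Gr\"obner duality theorem quoted just before it, so your write-up actually fills in the reasoning the authors leave implicit. The structure you use (dimension count via the Chinese Remainder decomposition $A\cong\bigoplus_i K[\bfx]/Q_i$, separation of the local contributions via the idempotents $e_k$, then non-degeneracy of the local pairing) is exactly the standard route. One remark on the step you flag as delicate: it is in fact easier than you suggest. After localizing with $e_k$ you have obtained that $\mathbf{D}:=\sum_j c_{k,j}\mathbf{D}_{k,j}$ satisfies $\mathbf{D}(f)(\bfxi_k)=0$ for \emph{all} $f\in K[\bfx]$, not merely for $f$ modulo $Q_k$; writing $\mathbf{D}=\sum_{\bfalpha}a_\bfalpha\partial_\bfalpha$ and testing against $f=(\bfx-\bfxi_k)^{\bfbeta}$ gives $a_\bfbeta=\mathbf{D}\bigl((\bfx-\bfxi_k)^{\bfbeta}\bigr)(\bfxi_k)=0$ for every $\bfbeta$ (thanks to the normalization $\partial_\bfalpha=\tfrac{1}{\alpha_1!\cdots\alpha_n!}\partial^{|\bfalpha|}/\partial x_1^{\alpha_1}\cdots\partial x_n^{\alpha_n}$, which makes $\partial_\bfalpha\bigl((\bfx-\bfxi_k)^{\bfbeta}\bigr)(\bfxi_k)=\delta_{\bfalpha\bfbeta}$). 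So $\mathbf{D}=0$ in $K[[\partial]]$ directly, and you need neither the perfect pairing on $V_k\times K[\bfx]/Q_k$ nor the correspondence $V_k=\Delta_{\bfxi_k}(\nabla_{\bfxi_k}(V_k))$; only the fact that $\{\mathbf{D}_{k,1},\ldots,\mathbf{D}_{k,d_k}\}$ is linearly independent in $K[[\partial]]$. With that simplification your proof is complete and self-contained, using Gr\"obner duality only through the dimension identity $\dim_K V_i=\mathrm{mult}(Q_i)=d_i$.
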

Note that the  above choice for the dual basis ${\bf \Lambda}$  contains the evaluation maps for the roots of $I$, and using  this ${\bf \Lambda}$ in Theorem~\ref{multtheorem}  gives
an expression for the subresultant in terms of the roots of $I$.

\begin{example}
This is a very simple example containing an expression for a
subresultant in terms of the roots. \\Let $f_1:=x_1x_2,
\;f_2:=x_1^2+(x_2-1)^2-1,\; f_3:= c_0+c_1x_1+c_2x_2,$ with
$c_0,\,c_1,\,c_2\in \C$. Then $Z=\{(0,0),(0,2)\}$ where $(0,0)$ has
multiplicity 3 and $(0,2)$ is  simple.
 By computing explicitly, we check that $\TT:=\{1, x_1, x_2, x_2^2\}$ is a basis of
 $A=\C[x_1,x_2]/( f_1, f_2)$  and that
$${\bf \Lambda}:=\left\{1|_{(0,0)}, \partial_{\bfe_1}|_{(0,0)} , (\partial_{\bfe_2}+2\partial_{2\bfe_1}) |_{(0,0)}, 1|_{(0,2)} \right\}
$$
is a basis of $A^*$. We will use these bases to express the degree
$t=\rho=2$ subresultant  $\Delta_{x_1^2}(f_1, f_2, f_3)$ with
$\SS=\{x_1^2\}$ in terms of the roots of $f_1, f_2$. First,
$\Delta_{x_1^2}(f_1, f_2, f_3)$ is equal to the following $6\times
6$ determinant (since here the extraneous factor is 1):
$$
\Delta_{x_1^2}(f_1, f_2, f_3)=\det M_\SS=\det \begin{array}{|cccccc|}
\cline{1-6}
0&0&0&1&0&0\\
0&0&-2&1&0&1\\
0&0&0&0&1&0\\
c_0&c_1&c_2&0&0&0\\
0&c_0&0&c_1&c_2&0\\
0&0&c_0&0&c_1&c_2\\
\cline{1-6}
\end{array}= c_0^3+2c_0^2c_2.
$$
On the other hand, Theorem~\ref{multtheorem} gives the following expression:
$$
\left(\prod_{j=2}^2
\widetilde\Delta_{\TT_j}\right)
\frac{\det\OO_{\SS}({\bf\Lambda})}{\det V_{\TT}({\bf\Lambda})}=\frac{\det\begin{array}{|cccc|}
\cline{1-4}
0&0&2&0\\
c_0&c_1&c_2&c_0+2c_2\\
0&c_0&2c_1&0\\
0&0&c_0&2c_0+4c_2\\
\cline{1-4}
\end{array}}{\det
\begin{array}{|cccc|}
\cline{1-4}
1&0&0&1\\
0&1&0&0\\
0&0&1&2\\
0&0&0&4\\
\cline{1-4}
\end{array}}=\frac{4(c_0^3+2c_0^2c_2)}{4},
$$
using that $\TT_2=\{x_2^2\} $ is the degree 2 part of $\TT$, and
$$
\widetilde \Delta_{\TT_{2}}(\widetilde{f}_1,\widetilde{f}_2)=\det
\begin{array}{|ccc|}
\cline{1-3}
0&0&1\\
1&0&1\\
0&1&0\\
\cline{1-3}
\end{array}=1.
$$
\end{example}

\vskip 1cm

\end{document}